\theoremstyle{plain} \newtheorem{theorem}{Theorem}[section]
\theoremstyle{plain} \newtheorem{corollary}[theorem]{Corollary}
\theoremstyle{plain} \newtheorem{proposition}[theorem]{Proposition}
\theoremstyle{plain}\newtheorem{lemma}[theorem]{Lemma}
\theoremstyle{definition} \newtheorem{definition}[theorem]{Definition}
\theoremstyle{definition}
\theoremstyle{remark}\newtheorem{remark}[theorem]{Remark}
\theoremstyle{remark}\newtheorem{remarks}[theorem]{Remarks}
\newcommand{\C}{{\mathbb{C}}}
\newcommand{\R}{{\mathbb{R}}}
\newcommand{\F}{{\mathcal{F}}}
\newcommand{\X}{\mathfrak{X}}
\newcommand{\D}{\mathcal{D}}
\newcommand{\DF}{\mathcal{D}_{\mathcal F}}
\newcommand{\id}{\operatorname{Id}}
\newcommand{\AF}{\operatorname{Aut}(\mathcal{F})}
\newcommand{\aF}{\operatorname{\mathfrak{aut}}(\mathcal{F})}
\newcommand{\G}{\mathcal G}
\newcommand{\A}{\mathcal A}
\title[On the automorphism group of foliations]{On the automorphism group of foliations with geometric transverse structure}
\author{Laurent Meersseman, Marcel Nicolau and Javier Rib\'{o}n}
\date{}
\thanks{This work was partially supported by the grant MTM2015-66165-P from the Ministerio de Economia y Competitividad  of Spain.}
\subjclass{58D05, 53C12, 22E65}
\address{Laurent Meersseman\\ LAREMA\\ Universit\'{e} d'Angers\\ 
F-49045 Angers, France\\ meersseman@univ-angers.fr}
\address{Marcel Nicolau\\ Departament de Matem\`{a}tiques \\ 
Universitat Aut\`{o}noma de Barcelona \\ E-08193  Cerdanyola, Spain \\
nicolau@mat.uab.cat} 
\address{Javier Rib\'{o}n\\ Instituto de Matem\'{a}tica \\
Universidade Federal Fluminense\\ 
Niter\'{o}i, Rio de Janeiro, Brasil \\ jribon@id.uff.br}
\begin{document}

\begin{abstract}
Motivated by questions of deformations/moduli in foliation theory, we investigate the structure of some 
groups of diffeomorphisms preserving a foliation. We give an example of a $C^\infty$ foliation whose 
diffeomorphism group is not a Lie group in any reasonable sense. On the positive side, we prove that 
the automorphism group of a transversely holomorphic foliation or a riemannian foliation is a strong 
ILH Lie goup in the sense of Omori.
\end{abstract}

\maketitle



\section{Introduction}

Let $M$ be a closed smooth manifold.
We denote by
$\D= \D(M)$ the group of all $C^\infty$-diffeomorphisms of $M$ endowed with the $C^\infty$ topology. 
It is a metrizable topological group. 
In \cite{Les1}, Leslie proved that $\D$ is a Fr\'{e}chet Lie group; namely, 
a group with a structure of infinite dimensional manifold modeled on a Fr\'{e}chet space 
such that the group operations are smooth.  
Fr\'{e}chet manifolds are difficult to deal with as many of the fundamental results of calculus, 
like implicit function theorem 
or Frobenius theorem, are not valid in that category.  However, Palais proved that $\D$ has a richer structure: 
it is a ILH Lie group, that is a topological group that is
the inverse limit of Hilbert manifolds $\D^k$ each of which is a topological group \cite{Pal, Om1}. 
Later on, Omori introduced the notion of {\sl strong} ILH Lie group, whose precise 
definition is recalled in~\ref{ILH}. The interest of this stronger notion is that it allows to
formulate an implicit function theorem and 
a Frobenius theorem in that setting (cf. \cite{Om3}). In addition to demonstrate those results, Omori used them 
to show that $\D$ is a strong ILH Lie group.

It is natural to ask for similar results for groups of diffeomorphisms preserving a geometric structure. In \cite{EM}, 
Ebin and Marsden proved the existence of a structure of a ILH Lie group on 
$\D_\eta$ and $\D_\omega$,
the subgroups of smooth diffeomorphisms preserving a volume element $\eta$ or a 
symplectic form $\omega$ on $M$ respectively; and Omory showed that some subgroups of $\D$, including  $\D_\eta$ and $\D_\omega$, 
are strong ILH Lie groups (cf. \cite{Om3, Om4}). 
\vspace{5pt}\\
\indent In this paper, we are interested in the structure of groups of diffeomorphisms preserving a foliation; and preserving 
a foliation with some additional geometric transverse structure. It turns out that the situation is much more complicated for 
those subgroups. We give in this paper a negative result but also two positive ones.
\vspace{5pt}\\
\indent Our motivation comes from deformation problems. We want to understand the deformations/moduli space of foliations 
with a geometric transverse structure. The existence of a "nice" moduli space is closely related to the existence of a "nice" 
Lie group structure on the associated diffeomorphism groups (see for example \cite{MN}). A strong ILH Lie group structure 
is a nice structure, especially because of the implict function and Frobenius Theorem proved by Omori in this context.

Let us first assume that $\F$ is a smooth foliation on $M$.  
Call $\DF$ the group of foliation preserving diffeomorphisms.
In \cite{Les2}, Leslie asserted that $\D_\F$ is a Fr\'{e}chet Lie group. Unfortunately there was a gap 
in his demonstration, which remains only valid for foliations with
a transverse invariant connection. 
Indeed, for certain foliations the group $\D_\F$ is not a Lie group in any reasonable sense.
This fact seems to be folklore although, as far as we know, there is no published example. 
In part \ref{cex}, we construct and discuss in detail such a foliation.

Let us now consider a type of geometric structure $\Xi$ having 
the property that if $\epsilon$ is a structure of type $\Xi$ on a compact manifold $M$ then the automorphism 
group $\operatorname{Aut}(M,\epsilon)$ is a Lie group in a natural way. Assume that the foliation $\F$ 
is endowed with a geometric transverse structure of type $\Xi$. Because of the previous negative result, there is no reason for the automorphism
group $\AF$ (that is the group of the diffeomorphisms of $M$ which preserve the foliation as well as 
its geometric transverse structure) to be a strong ILH Lie group. We show this is nevertheless true in the following two cases: $\F$ is 
a transversely holomorphic foliation (part \ref{TH}) or a Riemannian foliation (part \ref{RF}).

\section{groups of diffeomorphisms preserving a foliation}
\label{cex}
Throughout this article $M$ will be a fixed closed smooth manifold. Let $\X=\X(M)$ be the Lie algebra of smooth (of class $C^\infty$)
vector fields on $M$. Endowed with the $C^\infty$-topology, $\X$ is a separable Fr\'{e}chet space. We denote by
$\D= \D(M)$ the group of smooth (of class $C^\infty$) diffeomorphisms of $M$ endowed with the $C^\infty$ topology. 
It is a topological group, which is metrizable and separable; so it has second countable topology.


We recall that a Fr\'{e}chet manifold is a Hausdorff topological space with an atlas 
of coordinate charts taking values in Fr\'{e}chet  spaces in such a way that the coordinate 
changes are smooth maps  (i.e. they admit continuous 
Fr\'{e}chet derivatives of all orders). A Fr\'{e}chet Lie group is a Fr\'{e}chet manifold
endowed with structure of group such that multiplication and inverse maps are smooth. 
In \cite{Les1}, it was proved by Leslie that $\D$ is a Fr\'{e}chet Lie group with Lie algebra $\X$. 
In fact, $\D$ is endowed with a richer structure: it is the inverse limit of Hilbert manifolds $\D^k$ 
each of them being a topological group \cite{EM, Om1}. Moreover $\D$ has in fact 
the structure of a {\sl strong} ILH-Lie group \cite{Om3}, a notion introduced by Omori 
that can be defined as follows.

Recall that a Sobolev chain is a system $\{\mathbb E, E^k, k\in \mathbb N\}$ where $E^k$ are Hilbert spaces, with $E^{k+1}$
linearly and densely embedded in $E^k$, and $\mathbb E = \bigcap E^k$ has the inverse limit topology.

\begin{definition}\label{ILH}
A topological group $G$ is said to be a strong ILH-Lie group modeled on a Sobolev chain $\{\mathbb E, E^k, k\in \mathbb N\}$
if there is a family
$\{G^k, k\in \mathbb N\}$ 
fulfilling
\begin{enumerate}[1)]
\item $G^k$ is a smooth Hilbert manifold, modeled on  $E^k$, and a topological group; moreover 
$G^{k+1}$ is embedded as a dense subgroup of $G^k$ with 
inclusion of class $C^\infty$.
\item $G=\bigcap G^k$ has the inverse limit topology and group structure.
\item Right translations $R_g\colon G^k\to G^k$ are $C^\infty$.
\item Multiplication and inversion extend to $C^m$ mappings $G^{k+m}\times G^k \to G^k$ and 
$G^{k+m}\to G^k$ respectively.
\item Let $\mathfrak g^k$ be the tangent space of $G^k$ at the neutral element $e$ and let $T{G^k}$ be the tangent bundle of $G^k$. 
The mapping $dR\colon \mathfrak g^{k+m} \times G^k \to T{G^k}$, defined by $dR(u , g)= dR_g u$,
is of class $C^m$.
\item 
There is a local chart $\xi\colon \tilde U\subset \mathfrak g^1 \to G^1$, with $\xi(0) = e$ whose restriction
to $\tilde U\cap \mathfrak g^k$ gives a local chart of $G^k$ for all $k$.
\end{enumerate}
\end{definition}

\begin{remarks}
\begin{enumerate}[a)]
\item The strong ILH-Lie group $\D$ is modeled on the Sobolev chain 
$\{\X, \X^k, k\in \mathbb N\}$, where $\X^k$ are the Sobolev completions of $\X$. In that case $G^k= \D^k$ are the Sobolev 
completions of $\D$ and there are natural identifications $\mathfrak g\equiv \X$ and $\mathfrak g^k\equiv \X^k$
(cf. \cite{Om3}, Theorem 2.1.5).

\item Weaker definitions of ILH-Lie groups  were considered in articles prior to  \cite{Om3}, in particular in 
\cite{EM, Om1}. In the present paper we will only consider ILH-Lie groups in the strong sense given by the above definition.

\item This strong definition ensures, by means of the last condition, that $G$ is endowed with a structure of Fr\'{e}chet Lie group.
The space $\mathfrak g = \cap \mathfrak g^k$ with the inverse limit topology is a Fr\'{e}chet space. Moreover, it is naturally 
endowed with a Lie algebra structure (cf. \cite{Om3}, I.3). We call $\mathfrak g$ the Lie algebra of $G$. 
\end{enumerate}
\end{remarks}

Suppose that $H$ is a subgroup of a strong ILH-Lie group $G$ and let $\mathfrak B$ be a basis of neighborhoods of $e$
in $G$. For each $U\in \mathfrak B$, let $U_0(H)$ be the set of points $x\in U\cap H$ such that $x$ and $e$
can be joined by a piecewise smooth curve $c(t)$ in  $U\cap H$. Here, piecewise smooth means that 
the mapping $c\colon [0,1] \to G^k$ is piecewise of class $C^1$ for each $k$. Then the family 
$\mathfrak B_0(H)= \{U_0(H)\mid U\in\mathfrak B\}$ satisfies the axioms of neighborhoods of 
$e$ of topological groups. As in \cite{Om3},
this topology will be called the LPSAC-topology of $H$, where LPSAC stands for ``linear piecewise 
smooth arc-connected". In general the LPSAC-topology does not coincide with the induced topology.

\begin{definition}
Let $G$ be a strong ILH-Lie group 
and let $\mathfrak g^k$ be the tangent space of $G^k$ at $e$. 
A subgroup $H$ of $G$ is called a {\sl strong ILH-Lie subgroup} of $G$ if 
there is a splitting $\mathfrak g = \mathfrak h \oplus \mathfrak f$ such that
the following 
two conditions are fulfilled:
\begin{enumerate}[i)]
\item It induces splittings  $\mathfrak g^k = \mathfrak h^k \oplus \mathfrak f^k$ for each $k$, where 
$\mathfrak h^k$ and $\mathfrak f^k$ are, respectively, the closures of $\mathfrak h$ and $\mathfrak f$ in $\mathfrak g^k$.
\item There is a local chart $\xi\colon \tilde U\subset \mathfrak g^1 \to G^1$ fulfilling condition~6) in Definition~\ref{ILH}
such that $\xi(\tilde U\cap\mathfrak h) =U_0(H)$, where $U = \xi(\tilde U \cap \mathfrak g)$
\end{enumerate}
\end{definition}

\begin{remark}\label{countable}
Clearly, a strong ILH-Lie subgroup $H$ of a strong ILH-Lie group $G$ is itself a strong 
ILH-Lie group under the LPSAC-topology. 
\end{remark}

In \cite{Om3}, Omori was able to state some of the classical theorems of analysis, 
like the implicit function theorem or Frobenius theorem, in the setting of strong ILH-Lie groups.
The following is a special case of Frobenius theorem that will be used later 
(cf. \cite{Om3}, Theorem 7.1.1 and Corollary 7.1.2).

\begin{theorem}[Omori]\label{Frobenius}
Let $E$ and $F$ be Riemannian vector bundles over $M$ and let $A\colon \Gamma(TM)\to \Gamma(E)$
and $B\colon \Gamma(E)\to \Gamma(F)$ be differential operators of order $r\geq 0$ with smooth coefficients.
Assume that the following conditions are fulfilled:
\begin{enumerate}[i)]
\item $B\,A = 0$.
\item $A\, A^\ast + B^\ast\,B$ is an elliptic differential operator, where $A^\ast$, $B^\ast$ are
formal adjoints of $A$, $B$ respectively.
\item $\mathfrak h = \ker A$ is a Lie subalgebra of $\X=\Gamma(TM)$.
\end{enumerate}
Then there is a strong ILH-Lie subgroup $H$ of $\D$ whose Lie algebra is $\mathfrak h$.
Moreover, $H$ is an integral submanifold of the distribution $\mathcal H$ in $T\D$ given by 
right translation of $\mathfrak h$, i.e. $\mathcal H = \{dR_f\mathfrak h \mid f\in \D\}$.
\end{theorem}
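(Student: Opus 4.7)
The plan is to build $H$ locally near the identity as the image of $\mathfrak h$ (and its Sobolev completions $\mathfrak h^k$) under a carefully chosen chart of $\D$, then to verify the subgroup axioms and the integrability claim. The two ingredients driving the construction are elliptic regularity, which produces a clean orthogonal splitting of $\X$ on every Sobolev level, and the fact that $\mathfrak h$ is a Lie subalgebra, which forces the local slice to be closed under composition.

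First I would exploit hypothesis ii). Since $L=AA^\ast+B^\ast B$ is elliptic and formally self-adjoint, standard Hodge theory on each Hilbert completion $\X^k$ gives an $L^2$-orthogonal decomposition $\X^k=\ker L^k\oplus \operatorname{im}L^k$, and the identity $BA=0$ together with $L A = AA^\ast A$ (which lands in $\operatorname{im}A^\ast$) allows one to refine this to $\X^k=\ker A^k\oplus \overline{\operatorname{im}A^\ast}^{\,k}$, with $\ker A^k$ being exactly the closure of $\mathfrak h$ in $\X^k$. Setting $\mathfrak h^k=\ker A^k$ and $\mathfrak f^k=\overline{\operatorname{im}A^\ast}^{\,k}$, one obtains the splitting required by i) of the definition of a strong ILH-Lie subgroup, and elliptic regularity guarantees that $\bigcap_k \mathfrak h^k=\mathfrak h$.

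Next I would transport this linear splitting to $\D$ via a convenient chart $\xi$. The natural candidate is the Riemannian exponential map (or any chart normal to $\mathfrak f$), which is known to give a simultaneous chart on every $\D^k$. One then \emph{defines} $H$ locally as $\xi(\tilde U\cap\mathfrak h)$ and extends by composition, and checks that the tangent distribution $\mathcal H=\{dR_f\mathfrak h\mid f\in\D\}$ is smooth by verifying that the projector $\pi_{\mathfrak h}$ onto $\mathfrak h$ along $\mathfrak f$ extends to each Sobolev level with uniform estimates; condition iii), that $\mathfrak h$ be a Lie subalgebra, then says precisely that $\mathcal H$ is involutive. The subgroup property at the identity would follow from a Baker--Campbell--Hausdorff-type argument: for $X,Y$ small in $\mathfrak h$, the product $\xi(X)\xi(Y)$ must be rewritten as $\xi(Z)$ with $Z\in\mathfrak h$, and involutivity together with an iteration/contraction scheme on the scale of Sobolev completions produces such a $Z$ with the correct regularity.

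The main obstacle, and the reason this is a genuinely deeper statement than its finite-dimensional analogue, is the control of the composition map across the whole Sobolev chain. Composition loses derivatives (condition 4) of Definition~\ref{ILH} only asserts $C^m$ regularity $G^{k+m}\times G^k\to G^k$), so the BCH-type fixed-point argument must be set up level by level while still producing a single smooth object in the inverse limit. Handling this loss of derivatives --- and showing that the resulting local subgroup actually furnishes a chart satisfying condition ii) of the definition of a strong ILH-Lie subgroup, i.e. that $\xi(\tilde U\cap\mathfrak h)$ coincides with the LPSAC-component $U_0(H)$ --- is where the bulk of the work lies, and is exactly where Omori's strong ILH implicit function theorem is invoked to close the loop.
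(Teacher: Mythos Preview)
The paper does not contain a proof of this theorem: it is stated as a result of Omori, with a reference to \cite{Om3}, Theorem~7.1.1 and Corollary~7.1.2, and is then used as a black box (in Propositions~\ref{d-tangent} and~\ref{P0}). There is therefore no in-paper argument to compare your proposal against.

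Your sketch is nonetheless broadly aligned with Omori's actual strategy in \cite{Om3}: ellipticity of $AA^\ast+B^\ast B$ yields, on every Sobolev level, a closed splitting $\X^k=\mathfrak h^k\oplus\mathfrak f^k$ with $\mathfrak h^k=\ker A^k$; hypothesis~iii) makes the right-invariant distribution $\mathcal H$ involutive; and an ILH Frobenius argument integrates $\mathcal H$ to a leaf through $e$, which is automatically a subgroup by right-invariance.

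One point deserves correction. You propose to \emph{define} $H$ locally as $\xi(\tilde U\cap\mathfrak h)$ for the Riemannian exponential chart (or ``any chart normal to $\mathfrak f$''), and only afterwards to verify the subgroup property. But for the exponential chart --- or indeed for any chart chosen in advance --- the image $\xi(\tilde U\cap\mathfrak h)$ is in general neither a local subgroup nor the integral leaf of $\mathcal H$. The chart appearing in condition~ii) of the definition of a strong ILH-Lie subgroup is not given a priori; it is precisely the \emph{output} of the Frobenius construction, obtained by straightening the distribution level by level while controlling the derivative loss. Your final paragraph effectively concedes this, but the earlier formulation reads as if the adapted chart can be taken off the shelf, which would bypass the entire difficulty.
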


Here, the notation $\Gamma(E)$ stands for the space of smooth sections of class $C^\infty$
of a given vector bundle $E$. 

As already remarked, the group $H$ in the above theorem is a 
strong ILH-Lie group with the LPSAC-topology. 
However that topology on $H$ can be stronger that the topology induced by $G$,
even if $H$ is closed in $G$. Therefore the above theorem is not sufficient by itself to assure that 
$H$, endowed with the induced topology, is a strong ILH-Lie group, or even a Fr\'{e}chet Lie group. 
The following Proposition gives sufficient conditions for the above two topologies 
to coincide.
(cf. \cite{Om3},~VII.1).

\begin{proposition}[Omori]\label{two-top} 
Let $H$ be a strong ILH-Lie subgroup of $G$ satisfying the conditions of the above theorem and assume that 
the following conditions are also fulfilled
\begin{enumerate}[\ \ a)]
\item $H$ is closed in $G$, 
\item $G$ is second countable and the LPSAC-topology of $H$ is also second countable.
\end{enumerate}
Then the topology on $H$ induced by $G$ coincides with the LPSAC-topology. 
\end{proposition}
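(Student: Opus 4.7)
The plan is to realize the statement as an instance of the automatic open mapping theorem for Polish groups, applied to the identity map on $H$ regarded with its two topologies.

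Write $\tau_1$ for the LPSAC-topology on $H$ and $\tau_2$ for the topology induced from $G$. Since $U_0(H) \subseteq U \cap H$ for every $U$ in a basis of neighborhoods of $e$ in $G$, the topology $\tau_1$ refines $\tau_2$, and the identity map
\[
i\colon (H,\tau_1) \longrightarrow (H,\tau_2)
\]
is a continuous, bijective homomorphism of topological groups. It thus suffices to show that $i$ is open.

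The key step is to check that both $(H,\tau_1)$ and $(H,\tau_2)$ are Polish topological groups. The ambient group $G$, being a strong ILH-Lie group, is the countable inverse limit $G = \bigcap G^k$ of Hilbert manifolds $G^k$, each of which carries a complete metric; hence $G$ is completely metrizable, and by hypothesis~(b) it is second countable, so $G$ is Polish. The closedness of $H$ in $G$ from hypothesis~(a) then makes $(H,\tau_2)$ Polish as well. For the LPSAC-topology, Remark~\ref{countable} says that $(H,\tau_1)$ is itself a strong ILH-Lie group, hence completely metrizable by the same inverse limit argument; together with the second countability of $\tau_1$ assumed in hypothesis~(b), this yields that $(H,\tau_1)$ is Polish.

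One then invokes the classical open mapping theorem of Banach--Pettis--Kuratowski for Polish groups: every continuous surjective homomorphism between Polish topological groups is automatically open. Applied to $i$, this gives that $i$ is a homeomorphism, and therefore $\tau_1 = \tau_2$.

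The main obstacle I anticipate is verifying carefully that $(H,\tau_1)$ is genuinely Polish; this requires unravelling the definition of a strong ILH-Lie subgroup to exhibit the Sobolev chain on $H$ and to confirm that its inverse limit topology coincides with the LPSAC-topology, rather than something coarser. Once that is secured, hypothesis~(a) is exactly what makes the target of $i$ Polish, hypothesis~(b) supplies the second countability needed on both sides, and the abstract open mapping theorem handles the rest.
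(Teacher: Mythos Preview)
The paper does not itself prove this proposition; it is attributed to Omori with a citation to \cite{Om3},~VII.1, and no argument is given in the text. So there is no in-paper proof to compare against.

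Your approach via the open mapping theorem for Polish groups is correct and is the standard route to such statements. The point you single out --- whether $(H,\tau_1)$ is genuinely Polish --- can be settled from the definitions. By Remark~\ref{countable}, $(H,\tau_1)$ is itself a strong ILH-Lie group, so $H=\bigcap_k H^k$ for Hilbert manifolds $H^k$ that are topological groups. Each $H^k$ is first countable (being locally modeled on a Hilbert space) and hence metrizable by Birkhoff--Kakutani; since $(H,\tau_1)$ is second countable by hypothesis~(b) and $H$ is dense in $H^k$, each $H^k$ is separable, hence a second-countable paracompact Hilbert manifold, hence completely metrizable. Thus each $H^k$ is Polish, and so is the countable inverse limit $(H,\tau_1)$. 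The same reasoning applied to $G=\bigcap_k G^k$ shows that $G$ is Polish, whence hypothesis~(a) makes $(H,\tau_2)$ Polish as a closed subspace. With both topologies Polish, the open mapping theorem for Polish groups applies to the continuous bijection $i$ exactly as you describe, and $\tau_1=\tau_2$.
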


\medskip
From now on we assume that $M$ is endowed with a smooth foliation $\F$ of dimension $p$ 
and codimension $m$.
We denote by $T\F$ its tangent bundle and by $\nu\F= TM/T\F$ the normal 
bundle of $\F$. 
The foliation $\F$ decomposes the manifold into the 
disjoint union of $p$-dimensional
submanifolds that are called the leaves of the foliation. 

An atlas adapted to $\F$ is a smooth atlas $\{(U_i, \varphi_i)\}$ of $M$,  
with $\varphi_i \colon U_i \to \mathbb R^p\times\R^m$ homeomorphisms, such that
the leaves $L$ of $\F$ are defined on $U_i$ by the level sets 
$\phi_i= constant$ of the submersions $\phi_i = pr_2\circ \varphi_i$.
Here $pr_2\colon\mathbb R^p\times\R^m\to \R^m$ denotes the projection onto the second factor.
In that case there is a cocycle $\{\gamma_{ij}\}$ of local smooth transformations of $\R^m$ 
such that 
\begin{equation}\label{cocycle}
\phi_j = \gamma_{ji}\circ \phi_i.
\end{equation} 
The subsets $\phi_i= constant$
are called the slices of $\F$ in $U_i$.

We consider now the following subgroups of $\D$ of foliation preserving diffeomorphisms
$$
\begin{array}{rcl}
\D_\F & = & \{f\in \D \mid f_\ast \F = \F\}, \\[2mm]
\D_{L} & = & \{f\in \D_\F \mid f (L) = L,\;\; \text{for each leaf} \; L\}.
\end{array}
$$
We denote by $\X_\F$ the Lie algebra of foliated vector fields 
(i.e. of vector fields whose flows preserve the foliation) 
and by $\X_{L}$ 
the Lie algebra of vector fields
that are tangent to $\F$ (i.e. vector fields that are smooth sections of $T\F$).
Notice that the flows associated to vector fields of $\X_\F$ and $\X_{L}$
define one-parameter subgroups of $\D_\F$ and $\D_{L}$ respectively.
The following result was proved by Omori in \cite{Om3}. We include a sketch of proof for later use.

\begin{proposition}[Omori]\label{d-tangent}
The group $\D_{L}$ is a strong ILH-Lie subgroup of 
$\D$ with Lie algebra $\X_{L}= \Gamma(T\F)$.
\end{proposition}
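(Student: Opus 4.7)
The strategy is to apply Omori's Frobenius theorem (Theorem~\ref{Frobenius}) with a well-chosen pair of differential operators. Fix a Riemannian metric on $M$ so that $TM$ splits orthogonally as $T\F\oplus \nu\F$, and define $A\colon \Gamma(TM)\to \Gamma(\nu\F)$ to be the pointwise orthogonal projection. This is a differential operator of order zero with smooth coefficients, and $\ker A = \Gamma(T\F) = \X_L$. Setting $B = 0$, the identity $BA = 0$ is trivial, the combination $AA^\ast + B^\ast B = AA^\ast = \id_{\Gamma(\nu\F)}$ is elliptic, and $\X_L$ is a Lie subalgebra of $\X$ by the Frobenius integrability of $T\F$. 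Theorem~\ref{Frobenius} therefore produces a strong ILH-Lie subgroup $H\leq \D$ with Lie algebra $\X_L$, which integrates the right-invariant distribution $\mathcal H = \{dR_f\X_L\mid f\in \D\}$. The natural complementary splitting is $\X = \X_L\oplus \Gamma(\nu\F)$, and this descends to splittings on each Sobolev level since $A$ is zeroth-order.

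Next, one identifies $H$ with $\D_L$. The inclusion $H\subseteq \D_L$ is immediate: $H$ is generated by time-$t$ maps of flows of vector fields in $\X_L = \Gamma(T\F)$, and such flows preserve every leaf of $\F$ individually. For the reverse inclusion, take the chart $\xi$ of Theorem~\ref{Frobenius}, satisfying $\xi(\tilde U \cap \X_L) = U_0(H)$ for some neighborhood $\tilde U$ of $0$. If $f = \xi(Y)\in \D_L\cap \xi(\tilde U)$, then the leaf-preservation condition $f(p)\in L_p$ for every $p\in M$, together with the transversality of the chart to $\nu\F$ at $e$, forces the $\nu\F$-component of $Y$ to vanish, so $Y\in \X_L$ and $f\in U_0(H)\subseteq H$. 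Right-translating this identification propagates the equality $\D_L\cap V = H\cap V$ from a neighborhood of the identity to any neighborhood of any $f_0\in \D_L$, yielding $\D_L = H$ globally.

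The main obstacle lies in the chart identification, that is, in arranging that $\xi(Y)\in \D_L$ precisely when $Y\in \X_L$ for $Y$ small. The naive Riemannian exponential $\xi(Y)(p) = \exp_p(Y(p))$ has this property only when the leaves of $\F$ are totally geodesic, which is a restrictive condition that cannot always be arranged. One instead builds $\xi$ from foliation-adapted charts, in which the time-$t$ maps of vector fields tangent to $T\F$ trivially preserve the local slices of $\F$, and patches these local models via a partition of unity. With such a chart in hand, the identification above goes through and the verification of the axioms of Definition~\ref{ILH}, combined with the splitting $\X = \X_L\oplus \Gamma(\nu\F)$, exhibits $\D_L$ as a strong ILH-Lie subgroup of $\D$ with Lie algebra $\X_L = \Gamma(T\F)$.
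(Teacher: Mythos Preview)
Your application of Theorem~\ref{Frobenius} with $A$ the projection onto $\nu\F$ and $B=0$ is exactly what the paper does, and the inclusion $H\subset \D_L$ via integration of curves tangent to $\X_L$ is also the same. The divergence, and the weak point of your argument, is in the converse identification.

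Your reverse inclusion rests on the claim that for the chart $\xi$ one has $\xi(Y)\in \D_L$ only if $Y\in\X_L$. You correctly observe that the Riemannian exponential will not do this unless leaves are totally geodesic, but your proposed remedy---``foliation-adapted charts\ldots patched via a partition of unity''---is not an actual construction: it is unclear what object is being patched, why the result is a chart for $\D$ satisfying condition~6) of Definition~\ref{ILH}, or why the patched map still has the leaf-matching property. The sentence ``transversality of the chart to $\nu\F$ at $e$ forces the $\nu\F$-component of $Y$ to vanish'' is also not a proof: transversality is a first-order condition at the identity and does not by itself control where $\xi(Y)$ lands for $Y\notin\X_L$.

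The paper closes this gap cleanly by choosing a linear connection on $M$ \emph{under which $\F$ is parallel} (such connections always exist for a smooth foliation). The associated exponential map then sends vectors tangent to $\F$ along geodesics that stay in leaves, so the induced chart on $\D$ automatically maps $\X_L$ into $\D_L$. This shows directly that each $\D_L^k$ carries a Hilbert-manifold structure and is therefore LPSAC; combined with the integral-submanifold property of $H$ from Frobenius (any piecewise $C^1$ curve in $\D_L^k$ through $e$ already lies in $H^k$), one obtains $H^k=\D^k_{L,0}$ and hence that $H$ is the identity component of $\D_L$. This is the missing ingredient in your sketch: replace the vague patching by the exponential of an $\F$-parallel connection, and the rest of your outline goes through.
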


\begin{proof}
Let $\pi\colon \Gamma(TM) \to \Gamma(\nu\F)$ be the linear mapping induced by the natural projection
$TM\to \nu\F$. We can regard $\pi$ as differential operator of order $0$ and its adjoint operator $\pi^\ast$,
constructed using Riemannian metrics on $TM$ and $\nu\F$, is also a differential operator of order $0$
such that $\pi\pi^\ast$ is elliptic (i.e. an isomorphism). Theorem~\ref{Frobenius} implies now that there is a
strong ILH-Lie subgroup $\D'_{L}$ of $\D$ with Lie algebra $\X_{L}$. Let $\D^k_{L}$ and $\D'^k_{L}$ be the 
Sobolev completions of $\D_{L}$ and $\D'_{L}$ respectively. Clearly $\D'_{L}\subset \D_{L}$ and 
$\D'^k_{L}\subset \D^k_{L}$. Since $\D'_{L}$ is obtained by the Frobenius theorem, if a piecewise $C^1$-curve
$c(t)$ satisfies $c(0) = e$ and $c(t)\in \D^k_{L}$, then $c(t)\in \D'^k_{L}$. 
%
%
Using the exponential 
mapping with respect to a connection under which $\F$ is parallel, one can see that 
$\D^k_{L}$ 
has a structure of Hilbert manifold, hence  it is LPSAC. This implies that 
$\D'^k_{L}= \D^k_{L,0}$ (the connected component of $\D^k_{L}$ containing $e$)
and therefore that $\D'_{L}$ is the connected component 
of $\D_{L}$ containing the identity. 
\end{proof}

\begin{remark}
It follows from the above Proposition that $\D_{L}$ is a 
strong ILH-Lie group with the LPSAC-topology and, in particular, it is endowed with a structure of Fr\'{e}chet Lie group.
In general, however, $\D_{L}$ is not closed in $\D$.  This is the case for instance if $\F$ is a linear foliation 
on the 2-torus $T^2$ with irrational slope.
\end{remark}

In \cite{Les2}, Leslie asserted that the group $\D_\F$ is a Fr\'{e}chet Lie group. Unfortunately there was an error 
in his demonstration, which remains valid only for Riemann's foliations or, more generally, for foliations with
a transverse invariant connection. 
In fact, the group $\D_\F$ may not be a Lie group in any reasonable sense.
That fact seems to be folklore although, as far as we know, there is no published example. 
We exhibit here a concrete one. More precisely we prove

\begin{theorem}\label{contra}
There is a foliation $\F$ on the $2$-torus $T^2$ such that $\D_{\F}$ cannot 
be endowed with a structure of a topological group fulfilling 
\begin{enumerate}[\ i)]
\item The inclusion $\D_\F \hookrightarrow \D(T^2)$ is continuous.
\item $\D_\F$ has second countable topology.
\item $\D_\F$ is locally path-connected.
\end{enumerate}
\end{theorem}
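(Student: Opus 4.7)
The plan is to construct an explicit foliation $\F$ on $T^2$ for which $\D_{\F}$, equipped with the topology $\tau_{\mathrm{ind}}$ induced from $\D(T^2)$, already has uncountably many path components, and then to rule out any finer group topology satisfying (ii) and (iii) by a soft argument about locally path-connected second-countable topological groups. The latter step does not depend on the specific foliation.

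For the construction, I would work on $T^2 = S^1_x \times S^1_y$, fix the convergent sequence $K = \{0\}\cup \{1/n : n\geq 2\} \subset S^1_y$, and choose a smooth nonnegative function $\rho \colon S^1_y \to \R$ vanishing to infinite order precisely on $K$. Let $\F$ be the integral foliation of $X = \partial_x + \rho(y)\partial_y$: its compact leaves are exactly the horizontal circles $L_c := S^1\times\{c\}$ for $c\in K$, and each strip $R_n := S^1 \times (1/(n+1),1/n)$ is a Reeb-like component whose leaves spiral between the boundary circles. The infinite-order vanishing of $\rho$ at points of $K$ will allow local modifications of $\F$ supported in the union of the $R_n$ to be patched smoothly across the accumulation leaf $L_0$. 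The key preliminary observation is that any $f\in \D_{\F}$ sends compact leaves to compact leaves, fixes $L_0$ (being the unique non-isolated compact leaf), and that the induced self-bijection $\phi_f$ of $K$ is locally constant for $\tau_{\mathrm{ind}}$: for any $p$ lying on a compact leaf, the continuous map $t\mapsto f_t(p)$ has its $y$-coordinate valued in the totally disconnected set $K$, hence is constant.

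Next, I would identify a $\mathbb{Z}/2$-valued discrete invariant $\varepsilon_n(f)$ attached to each strip $R_n$, encoding whether $f$ preserves or reverses an intrinsic involution of $R_n$ built into the construction of $\F$, and then realize arbitrary $(\varepsilon_n)_n \in \{\pm 1\}^{\mathbb{N}}$. Concretely, I would design $\F$ inside each $R_n$ so that it admits a smooth involution $\iota_n$ preserving $\F$ and equal to the identity on a collar neighborhood of $\partial R_n$; then for each subset $A\subseteq\mathbb{N}$, the diffeomorphism $f_A$ obtained by composing $\iota_n$ for $n\in A$ with the identity on the complement is a foliation-preserving diffeomorphism of $T^2$, thanks again to the flatness of $\rho$ at $L_0$. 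The invariant $\varepsilon_n$ is locally constant for $\tau_{\mathrm{ind}}$ by the same totally-disconnected-target argument, so the $2^{\aleph_0}$ diffeomorphisms $f_A$ lie in pairwise distinct $\tau_{\mathrm{ind}}$-path components of $\D_{\F}$. The main technical obstacle I foresee is the explicit construction of the involution $\iota_n$ preserving $\F$ and supported away from $\partial R_n$: the obvious candidates (midpoint reflection in $y$, reflection in $x$) either exchange the boundary circles of $R_n$ or are incompatible with a general $\rho$. The resolution is to design $\F$ inside each strip so as to admit a genuine involution that is the identity in a neighborhood of the boundary---for instance, by placing an auxiliary interior compact leaf in each $R_n$ and modeling $\F$ in each half-strip on a standard Reeb component with an internal $\mathbb{Z}/2$-symmetry.

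To conclude, assume a group topology $\tau$ on $\D_{\F}$ satisfies (i), (ii), and (iii). By (i), $\tau \supseteq \tau_{\mathrm{ind}}$, so every $\tau$-continuous path is $\tau_{\mathrm{ind}}$-continuous and each $\tau$-path component is contained in a $\tau_{\mathrm{ind}}$-path component; hence $\pi_0(\D_{\F},\tau)$ still has at least $2^{\aleph_0}$ elements. On the other hand, (iii) furnishes a path-connected open neighborhood of the identity in $(\D_{\F},\tau)$, so the identity path component is $\tau$-open; being a topological group, its left cosets are $\tau$-open as well, and $\D_{\F}$ decomposes into uncountably many pairwise disjoint nonempty open sets, contradicting the second countability (ii).
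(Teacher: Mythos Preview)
Your final ``soft'' paragraph is correct: once one knows that $(\D_\F,\tau_{\mathrm{ind}})$ has uncountably many path components, conditions (i)--(iii) are indeed incompatible, by exactly the open-coset argument you give.

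The gap is in the construction itself. Your invariant $\varepsilon_n$ is supposed to detect that $\iota_n$ ``swaps'' some intrinsic structure in $R_n$, and you justify its local constancy by the totally-disconnected-target argument, i.e.\ via the induced permutation of compact leaves. But any foliation-preserving diffeomorphism of the annulus $R_n$ that is the identity on a collar of $\partial R_n$ fixes \emph{every} compact leaf of $\F|_{R_n}$: compact leaves of a nonsingular line foliation tangent to $\partial R_n$ are essential circles, hence linearly ordered; your $\iota_n$ fixes the two outermost ones and, by the obvious inductive region-counting, all the intermediate ones as well (if $\iota_n$ sent the first interior compact leaf $C$ to a deeper one $C'$, the sub-annulus between $\partial R_n$ and $C$, which contains no compact leaf, would map onto the sub-annulus between $\partial R_n$ and $C'$, which does contain one). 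So the only ``totally disconnected'' data visible to your argument is trivial on every $\iota_n$, and no $\mathbb{Z}/2$ swapping invariant of the kind you describe can separate $\iota_n$ from the identity. Your proposed fix (auxiliary interior compact leaf, internal symmetry of a Reeb half-strip) does not escape this.

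There is a second, more structural obstruction. For the infinite product $f_A$ to be smooth across $L_0$ you need $\|\iota_n-\mathrm{id}\|_{C^k}\to 0$ for every $k$; so you are really asking for, in each $R_n$, a foliation-preserving diffeomorphism arbitrarily $C^\infty$-close to the identity yet lying in a different path component of $\D_{\F}$. Producing such elements is precisely the hard analytic input, and this is where the paper's proof diverges from yours: the paper does not attempt a direct Reeb-type construction at all, but takes $\F$ to be the suspension of a single circle diffeomorphism $h_0$ with Liouville rotation number, and appeals to a theorem of Yoccoz guaranteeing that the $C^\infty$-centralizer $Z^\infty(h_0)$ has the cardinality of the continuum in every neighborhood of the identity while (because $h_0$ is non-linearizable) admitting no nonconstant continuous paths. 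The continuous ``transverse part'' map $P\colon \D_\F\to Z^\infty(h_0)$ then forces uncountably many path components in every $\tau_{\mathrm{ind}}$-neighborhood of the identity. No elementary combinatorics of compact leaves is known to reproduce this phenomenon.
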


We dedicate the rest of the section to prove that statement.

Assume that an orientation preserving diffeomorphism $h \in \D(S^1)$ has been given. 
Let $\tau\colon \R\rightarrow\R$ be the translation $\tau(t)= t+1$ and
let $\tilde h\colon \R \times S^1 \rightarrow \R \times S^1$ denote the map defined by
$$
\tilde h(t, x) = (\tau(t) , h(x)) = (t + 1, h(x)).
$$
Since $h$ is isotopic to the identity, the quotient manifold $M=\R \times S^1/\langle \tilde h\rangle$ is
the 2-torus $T^2$.
The foliation $\tilde{\F}$ of $\R \times S^1$ whose leaves are $\R\times \{x\}$ is preserved by $\tilde h$. 
Hence, $\tilde{\F}$ induces a well defined foliation $\F$ on $M$, which is transverse to the fibres of the natural projection
$$
\pi \colon \R \times S^1 /\langle \tilde h\rangle \,\longrightarrow S^1= \R/\langle \tau \rangle.
$$
We say that $\F$ is the foliation on $T^2$ obtained by suspension of $h$.

We fix a circle $C = \pi^{-1}(t_0)$ inside the 2-torus $T^2$. In a neighborhood $U$ of $C$, the foliation $\F$ is 
a product $(t_0-\epsilon, t_0+\epsilon) \times C$ and each element $f\in \DF$ close enough to the identity map $\id$ 
sends $C=\{t_0\} \times C$ into $U$. Moreover, in the adapted
coordinates $(t, x)$, it is written
$$
f(t,x) = (f_1(t, x), f_2(x)).
$$
Therefore, there is a neighborhood $\mathcal W$ of $\id$ in $\DF$ such that the map
\begin{equation}\label{P}
\begin{array}{rcc}
P \colon \; \mathcal W & \rightarrow & \D(S^1) \\
  f & \mapsto & f_2
\end{array}
\end{equation}
is well-defined. Notice that, if we consider on $\D(S^1)$ the topology induced by $\D$, then $P$ is a continuous map; 
it is also a morphism of local groups.

\begin{lemma}
The map $P$ sends $\mathcal W$ into the centralizer $Z^\infty(h)$ of $h$ in $\D(S^1)$.
Moreover, $P$ has a continuous right-inverse map $\sigma$ defined on a neighborhood $\mathcal V$ 
of the identity in $Z^\infty(h)$.
\end{lemma}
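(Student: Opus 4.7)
The plan is twofold: first, to verify the centralizer condition by lifting $f$ to the universal cover $\R\times S^1$; second, to construct $\sigma$ by an explicit formula on that cover which descends to $T^2$ precisely because $g$ commutes with $h$.

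For the first part, take $f\in\mathcal W$ close enough to $\id$ that $f$ admits a unique lift $\tilde f\colon \R\times S^1\to \R\times S^1$ close to the identity of $\R\times S^1$. Since $\tilde f$ covers a diffeomorphism of $T^2$, it commutes with the deck transformation $\tilde h$; and since $f$ preserves $\F$, the lift $\tilde f$ preserves the horizontal foliation $\tilde\F$ lifting $\F$. This preservation forces $\tilde f(t,x) = (\tilde f_1(t,x),\psi(x))$ for some diffeomorphism $\psi$ of $S^1$. Equating the second coordinates of $\tilde h\circ\tilde f = \tilde f\circ\tilde h$ yields $h\circ\psi = \psi\circ h$. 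The adapted coordinates on $U$ come from a fundamental domain of the cover containing $C$, and under them the restriction of $\tilde f$ to $\{t_0\}\times S^1$ is precisely $(t_0,x)\mapsto (f_1(t_0,x),f_2(x))$; so $f_2 = \psi\in Z^\infty(h)$ and $P(f)\in Z^\infty(h)$.

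For the right-inverse, given $g\in Z^\infty(h)$, set $\tilde\sigma(g)(t,x) := (t,g(x))$. A direct check shows $\tilde h\circ\tilde\sigma(g) = \tilde\sigma(g)\circ\tilde h$ if and only if $h\circ g = g\circ h$, which holds by hypothesis. Hence $\tilde\sigma(g)$ descends to a diffeomorphism $\sigma(g)$ of $T^2$ which preserves $\F$ because $\tilde\sigma(g)$ preserves $\tilde\F$ tautologically. In the adapted coordinates on $U$ one reads $\sigma(g)(t,x) = (t,g(x))$, which gives $P(\sigma(g)) = g$. The map $g\mapsto \sigma(g)$ is continuous in the $C^\infty$ topology, since on the cover it is simply the identity in the first coordinate and $g$ in the second, and it sends $\id$ to $\id$. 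Shrinking the neighborhood $\mathcal V$ of $\id$ in $Z^\infty(h)$ if necessary so that $\sigma(\mathcal V)\subset\mathcal W$ completes the construction.

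The only delicate point is the bookkeeping between the universal cover and the quotient $T^2$, together with the identification of $f_2$ with the map $\psi$ induced by $\tilde f$ on the local leaf space; once this is in place the computations are immediate and no genuine obstacle remains.
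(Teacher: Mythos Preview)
Your proof is correct and follows essentially the same route as the paper. The paper dismisses the first assertion as ``an easy computation'' and then constructs $\sigma$ exactly as you do, via the $\tilde h$-equivariant map $(t,x)\mapsto(t,g(x))$ on $\R\times S^1$; your contribution is simply to spell out that easy computation explicitly through the lift to the universal cover.
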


\begin{remark}
It follows from the above statement that the image of the map $P$
contains a neighborhood of the identity in $Z^\infty(h)$.
\end{remark}

\begin{proof}
The first assertion follows from an easy computation.
%
Given an element $g_1\in Z^\infty(h)$ close enough to the identity, the map
$\tilde g\colon \R \times S^1 \rightarrow \R \times S^1$ defined by $\tilde{g}(t,x)=(t,g_{1}(x))$
commutes with $\tilde h$ and preserves the foliation $\F$. Hence, it induces an element $g\in \DF$
which is close to the identity. The map $g_1\mapsto \sigma(g_1) = g$ is well-defined on a 
neighborhood $\mathcal V$ of the identity in $Z^\infty(h)$. Clearly, the map $\sigma$ is continuous and fulfills 
$P\circ \sigma = \id$. 
\end{proof}

\begin{lemma}\label{restriction}
There is a neighborhood $\mathcal W'$ of the identity in $\D(T^2)$ and a smooth map 
$\Psi\colon \mathcal W' \to \D(S^1)$ such that $P$ is the composition
$$
P\colon \mathcal W'\cap \DF \hookrightarrow \mathcal W' \overset{\Psi}\longrightarrow \D(S^1)
$$
\end{lemma}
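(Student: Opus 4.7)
The plan is to realize $P(f)=f_2$, which \emph{a priori} makes sense only for $f\in\DF$, as the restriction to $\DF$ of a smooth operation that is naturally defined on all diffeomorphisms of $T^{2}$ close to $\id$: take the restriction of $f$ to the transversal $C$ and then project back to $S^{1}$ along the leaves of $\F$ inside the product neighborhood $U$.

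Concretely, in the adapted coordinates $(t,x)$ on $U\cong(t_{0}-\epsilon,t_{0}+\epsilon)\times S^{1}$, the plaques of $\F|_{U}$ are exactly the level sets of the projection $\mathrm{pr}_{2}\colon U\to S^{1}$, $(t,x)\mapsto x$. Identifying $C=\{t_{0}\}\times S^{1}$ with $S^{1}$, let $\iota\colon S^{1}\hookrightarrow T^{2}$ denote the inclusion of $C$. I would take $\mathcal W'$ to be a neighborhood of $\id$ in $\D(T^{2})$ small enough that $\mathcal W'\cap \DF\subset \mathcal W$, that $f(C)\subset U$ for every $f\in \mathcal W'$, and that $\mathrm{pr}_{2}\circ f\circ \iota\colon S^{1}\to S^{1}$ is a diffeomorphism. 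The last condition is $C^{1}$-open and satisfied at $\id$ (its differential there is the identity of $TS^{1}$), so such a $\mathcal W'$ exists. I then set
$$
\Psi(f)\;=\;\mathrm{pr}_{2}\circ f\circ \iota .
$$

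Smoothness of $\Psi\colon \mathcal W'\to \D(S^{1})$ follows from standard properties of the Fr\'echet Lie group $\D$: pre-composition with the fixed smooth embedding $\iota$ defines a smooth map $\D(T^{2})\to C^{\infty}(S^{1},T^{2})$; post-composition with the fixed smooth map $\mathrm{pr}_{2}$ is smooth on the open subset of maps landing in $U$; and $\D(S^{1})$ is open in $C^{\infty}(S^{1},S^{1})$. Finally, the equality $\Psi|_{\mathcal W'\cap \DF}=P$ is a direct verification: for $f\in \DF\cap \mathcal W'$, preservation of the plaques $\{x=\text{const}\}$ in $U$ forces the coordinate expression $f(t,x)=(f_{1}(t,x),f_{2}(x))$ from the definition~\eqref{P} of $P$, and hence $\Psi(f)(x)=\mathrm{pr}_{2}(f(t_{0},x))=f_{2}(x)=P(f)(x)$. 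The only point requiring care is the simultaneous choice of $\mathcal W'$ meeting the three openness conditions above, which is no substantial obstacle; the statement is essentially a repackaging of the local description of $P$ into a form that no longer refers to the foliation-preservation of $f$.
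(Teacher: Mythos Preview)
Your proof is correct and essentially identical to the paper's: both define $\Psi(f)=\operatorname{pr}_2\circ f|_C$ on a neighborhood $\mathcal W'$ chosen so that $f(C)\subset U$ and the resulting self-map of $S^1$ is a diffeomorphism (the paper phrases the latter as $f(C)$ being transverse to $\F$). The paper simply asserts smoothness of $\Psi$ where you spell out the standard justification via composition operators on mapping spaces.
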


\begin{proof}
Let  $\mathcal W'$ be a neighborhood of the identity in $\D(T^2)$ such that $P$ is defined
on $\mathcal W'\cap \DF$. If $\mathcal W'$ is small enough, we can also assume that,
for each $f\in \mathcal W'$, 
$f(\{t_0\}\times C)$ is contained in $U$ and it is transverse to the foliation $\F$. We define 
$\Psi(f)$ as the map 
$$\Psi(f) =  \operatorname{pr}_2 \circ f|_C \colon C\longrightarrow C$$ 
where 
$ \operatorname{pr}_2\colon U=(t_0-\epsilon, t_0+\epsilon)\times C \to C$ is the projection
onto the second factor. We can think of $\Psi(f)$ as an element of $\D(S^1)$. Clearly the map
$\Psi$ is smooth and fulfills the required conditions.
\end{proof}


Denote by $\D^r(S^1)$ the group of diffeomorphisms of $S^1$ of class $r$, where $0\leq r \leq \infty$,
endowed with the $C^r$-topology. 
For a given $h\in \D(S^1) = \D^\infty(S^1)$,
we denote by $Z^r(h)$ the centralizer of $h$ in $\D^r(S^1)$, and we denote by
$Z^r_0(h)$ the closure in $\D^r(S^1)$ of the group $\langle h \rangle$
generated by $h$. Notice that $Z^r_0(h)\subset Z^r(h)$.

Let
$$\rho\colon \D^0(S^1)\rightarrow S^1$$ 
be the map that associates to each element $h\in  \D^0(S^1)$
its rotation number $\rho(h)$.
We assume that  $h\in \D^\infty(S^1)$ has been given and that
its rotation number $\alpha=\rho(h)$ is irrational.
Because of Denjoy's theorem, there is a homeomorphism $\varphi$ of $S^1$ conjugating $h$
to the rotation $R_\alpha$, i.e. $\varphi\circ h \circ \varphi^{-1} = R_\alpha$. Since $\alpha$ is
irrational, the centralizer of $R_\alpha$ in $\D^0(S^1)$ is the group of rotations. Hence the centralizer 
$Z^0(h)$ is the set $\{\varphi^{-1}\circ R_\beta \circ \varphi \}$ with $\beta\in S^1$. The
natural inclusion
\begin{equation}\label{rot}
\iota\colon Z^\infty(h) \hookrightarrow Z^0(h) \cong S^1
\end{equation}
is a continuous group morphism mapping  $Z^\infty(h)$ onto a subgroup of $S^1$ 
(cf. \cite{Yoc}, p. 185). 
Note that the map $\iota$ is just the restriction of $\rho$ to $Z^\infty(h)$. 
Notice also that the identification of $Z^\infty(h)$ with a subgroup of $S^1$ given by 
$\iota$ is algebraic but not topological.

Let $d_\infty$ be a distance defining the topology of $\D^\infty(S^1)$ and set 
$$
K_\infty(f) = \inf_{n\geq 1} d_\infty (f^n, \id).
$$

\begin{lemma}
$K_\infty^{-1}(0) = A \cup B$ where $A$ is the subset of  $\D^\infty(S^1)$
of elements $f$ such that $Z_0^\infty(f)$ has the cardinality of the continuum
and $B$ is the subset of  $\D^\infty(S^1)$ of elements of finite order.
\end{lemma}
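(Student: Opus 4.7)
The plan is to prove both inclusions $A\cup B\subseteq K_\infty^{-1}(0)$ and $K_\infty^{-1}(0)\subseteq A\cup B$ separately, exploiting that $Z_0^\infty(f)$, being the closure in $\D^\infty(S^1)$ of the subgroup $\langle f\rangle$, is a closed topological subgroup of the Polish group $\D^\infty(S^1)$. The trivial inclusion $B\subseteq K_\infty^{-1}(0)$ is immediate: if $f^N=\id$ for some $N\geq 1$, then $d_\infty(f^N,\id)=0$.

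For $A\subseteq K_\infty^{-1}(0)$, assume $Z_0^\infty(f)$ has cardinality continuum. Since $\langle f\rangle$ is at most countable, one can choose $g\in Z_0^\infty(f)\setminus\langle f\rangle$ and write $g=\lim_k f^{m_k}$ with $m_k\in\mathbb Z$. The exponents $m_k$ necessarily take infinitely many distinct values (otherwise $g$ would lie in $\langle f\rangle$), so after extraction of a subsequence one may assume them strictly monotonic; in particular $m_{k+1}-m_k$ has constant sign and absolute value $\geq 1$. Continuity of composition and inversion in $\D^\infty(S^1)$ then gives $f^{m_{k+1}-m_k}=f^{m_{k+1}}\cdot(f^{m_k})^{-1}\longrightarrow gg^{-1}=\id$. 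Replacing exponents by their absolute values (noting that $f^{-n}\to\id$ iff $f^n\to\id$) produces positive integers $n_k$ with $f^{n_k}\to\id$, whence $K_\infty(f)=0$.

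For the reverse inclusion, let $f\in K_\infty^{-1}(0)\setminus B$, so $f$ has infinite order and there exist $n_k\geq 1$ with $f^{n_k}\to\id$. Since $f^{n_k}\neq\id$ for all $k$, the identity is a non-isolated point of $\langle f\rangle$, hence of $Z_0^\infty(f)$. In any topological group, left translations are self-homeomorphisms, so the set of isolated points is either the whole group or empty: thus no point of $Z_0^\infty(f)$ is isolated. Being a closed subset of the Polish space $\D^\infty(S^1)$, $Z_0^\infty(f)$ is itself Polish, and therefore a non-empty perfect Polish space. By the classical Cantor--Bendixson theorem every such space has cardinality $2^{\aleph_0}$, so $f\in A$.

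The crucial step, and the only non-routine one, is this last cardinality dichotomy: a closed subgroup of $\D^\infty(S^1)$ is either discrete (hence countable) or perfect of cardinality continuum. The rest of the proof is a careful translation between the quantitative condition $K_\infty(f)=0$ and the qualitative existence of accumulation points of $\langle f\rangle$ at $\id$.
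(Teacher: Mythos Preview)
Your proof is correct and follows essentially the same strategy as the paper's: both directions hinge on the dichotomy that a closed subgroup of the Polish group $\D^\infty(S^1)$ is either discrete (hence countable) or perfect (hence of cardinality $2^{\aleph_0}$), together with the differencing trick $f^{m_{k+1}-m_k}\to\id$ applied to a convergent sequence of powers. The only cosmetic differences are that you make the Polish-space and translation arguments explicit where the paper simply asserts ``$Z_0^\infty(f)$ is perfect'', and that for the inclusion $A\subset K_\infty^{-1}(0)$ you pick $g\in Z_0^\infty(f)\setminus\langle f\rangle$ and approximate it, whereas the paper argues directly from non-discreteness; both lead to the same differencing step.
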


\begin{proof}
Let $f$ be an element of $K_\infty^{-1}(0) \backslash B$.
Each neighborhood of the identity contains non trivial elements of  $\,\langle f \rangle\,$ and 
therefore of its closure $Z_0^\infty(f)$ with respect to the $C^\infty$-topology. Hence the group 
$Z_0^\infty(f)$ is closed and non-discrete. As $Z_0^\infty(f)$ is perfect, it contains a Cantor set and 
its cardinality is bigger or equal to the cardinality of the continuum $2^{\aleph_0}$. 
In fact $Z_0^\infty(f)$ is separable, as it is a space of smooth functions on a compact manifold, 
and therefore its cardinality is that 
of the continuum.
This shows that $K_\infty^{-1}(0) \subset B \cup A$.

Clearly $B$ is a subset of $K_\infty^{-1}(0)$. Let us show that each element $f\in A$ belongs to 
$K_\infty^{-1}(0)$. The group $Z_0^\infty(f)$ cannot be discrete. Otherwise it would be countable, 
since it is separable, getting a contradiction. We deduce that there is a sequence $(f^{n_k})_{k\geq 1}$
of pairwise different elements that converges. Notice that $K_\infty^{-1}(0)$ is closed under inversion.
Hence we can assume that $n_k >0$. Moreover we can also assume that the sequence 
$n_k$ is strictly increasing. We choose a sequence $m_k= n_{q(k)} - n_k$ such that $q(k) > k$ 
for each $k\in \mathbb N$ and such that $(m_k)_{k\geq 1}$ is strictly increasing. Then 
$(f^{m_k})_{k\geq 1}$ is a sequence of non-trivial elements converging to the identity and 
therefore $f$ belongs to $K_\infty^{-1}(0)$.
\end{proof}

Given an irrational number $\alpha\in S^1-\mathbb Q/\mathbb Z$, set
$$
Z^\infty_0 (\alpha) = \bigcap_{\rho(f) = \alpha} Z^\infty_0(f)
$$
where the groups $Z^\infty_0(f)$ are thought as subsets of $S^1$.
The following result was proved by Yoccoz (cf. \cite{Yoc}, Th\'{e}or\`{e}me 3.5, p. 190)
\begin{theorem}[Yoccoz]
There is a residual set $\mathcal C$ of numbers $\alpha\in S^1-\mathbb Q/\mathbb Z$ such that  
$Z^\infty_0 (\alpha)$ has the cardinality of the continuum. 
\end{theorem}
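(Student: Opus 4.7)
The plan is to combine a Baire category argument with the construction of common $C^\infty$-accumulations of iterates. First I would observe that for any $f\in\D^\infty(S^1)$ with irrational rotation number $\alpha$, the restriction of $\iota$ to $Z^\infty(f)$ is an injective group homomorphism: two commuting diffeomorphisms with equal rotation number differ by an element of rotation number $0$, which has a fixed point and commutes with $f$, hence by density of $f$-orbits is the identity. Consequently each $\iota(Z^\infty_0(f))$ is a subgroup of $S^1$ containing the dense subgroup $\mathbb Z\alpha$, and
$$Z^\infty_0(\alpha) \;=\; \bigcap_{\rho(f)=\alpha}\iota\bigl(Z^\infty_0(f)\bigr)$$
is itself a subgroup of $S^1$ containing $\mathbb Z\alpha$.

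To produce the residual set $\mathcal C$, I would define, for each pair $(N,k)\in\mathbb N\times\mathbb N$,
$$V_{N,k} \;=\; \bigl\{\alpha\in S^1-\mathbb Q/\mathbb Z\;:\;\exists\,\beta\in Z^\infty_0(\alpha),\;d\bigl(\beta,\{j\alpha\bmod 1:|j|\le N\}\bigr)\ge 1/k\bigr\},$$
and set $\mathcal C=\bigcap_{N,k}V_{N,k}$. For $\alpha\in\mathcal C$ the subgroup $Z^\infty_0(\alpha)$ contains elements outside every finite truncation of $\mathbb Z\alpha$ at every scale; an inductive refinement of the density argument, producing $2^n$ witnesses in prescribed dyadic positions at the $n$-th step, builds an embedded Cantor subset of $Z^\infty_0(\alpha)$, yielding cardinality $2^{\aleph_0}$. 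Openness of each $V_{N,k}$ is the easier half: a small variation of $\alpha$ is matched by a small variation of the witness $\beta$, exploiting the stability under perturbation of the $C^\infty$-closure of the cyclic group $\langle f\rangle$.

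The main obstacle is proving density of each $V_{N,k}$. Given any nonempty open interval $I$ of irrationals, one must exhibit $\alpha\in I$ together with a single $\beta$ lying in $\iota(Z^\infty_0(f))$ \emph{for every} smooth $f$ with $\rho(f)=\alpha$. I would construct $\alpha$ as a limit of rationals $p_n/q_n$ with extremely rapid convergence and $\beta$ as a subsequential limit of $q_n\alpha\bmod 1$, with the aim of showing that $f^{q_n}$ is relatively compact in the $C^\infty$-topology with the prescribed limit. The difficulty is that for generic $\alpha\in I$ there exist smooth $f$ with $\rho(f)=\alpha$ not smoothly conjugate to $R_\alpha$, so uniform $C^\infty$-control of $f^{q_n}$ across all such $f$ is not available from the Herman--Yoccoz linearization theorem alone (which applies to a full-measure but meagre set of arithmetic conditions). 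The resolution requires Yoccoz's renormalization machinery, yielding a priori bounds on all derivatives of the return maps of $f$ under sufficiently fast approximation of $\alpha$ by rationals. This is the analytic heart of the proof and the essential technical obstruction, justifying why the theorem is a genuinely deep statement about circle dynamics rather than a formal consequence of Denjoy's theorem.
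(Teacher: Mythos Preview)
The paper does not supply a proof of this theorem: it is quoted verbatim as a result of Yoccoz, with the citation ``cf.\ \cite{Yoc}, Th\'eor\`eme~3.5, p.~190'' and no argument given. So there is no proof in the paper to compare your proposal against; the authors treat this as a black box from circle dynamics and use only its statement (together with Herman's non-linearizability result for Liouville rotation numbers) to derive the Corollary that follows.

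That said, your sketch has structural problems worth flagging. The openness of $V_{N,k}$ is not the ``easier half'': the set $Z^\infty_0(\alpha)$ is an \emph{intersection} over all smooth $f$ with $\rho(f)=\alpha$, so a witness $\beta$ must lie in $\iota(Z^\infty_0(f))$ simultaneously for uncountably many $f$. Perturbing $\alpha$ changes this entire family, and there is no evident continuity of $\alpha\mapsto Z^\infty_0(\alpha)$ that would let you follow $\beta$. Your phrase ``stability under perturbation of the $C^\infty$-closure of $\langle f\rangle$'' addresses a single $f$, not the intersection. Likewise, membership in $\mathcal C=\bigcap_{N,k}V_{N,k}$ as you defined it only yields, for each $(N,k)$, \emph{some} $\beta_{N,k}$ far from a truncated orbit; your ``inductive refinement producing $2^n$ witnesses in dyadic positions'' is asserted but not derived from this, and without knowing $Z^\infty_0(\alpha)$ is closed in $S^1$ (which is not obvious, since $\iota$ need not be a homeomorphism onto its image) a countable collection of witnesses does not automatically give continuum cardinality. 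Finally, you yourself concede that density requires Yoccoz's renormalization estimates, which is the entire content of the theorem --- so the proposal is less a proof than an identification of where the difficulty lies, which is accurate but not a substitute for the argument.
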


\begin{corollary}
There is an element  $h_0\in \D^\infty(S^1)$ whose rotation number $\alpha$ is Liouville 
and such that  $Z^\infty_0$ has the cardinality of the continuum. 
Moreover,
each continuous path $\gamma\colon [0,1]\rightarrow Z^\infty(h_0)$ is constant. 
\end{corollary}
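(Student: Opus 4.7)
The first step is to pick $\alpha$. The set of Liouville numbers in $S^{1}\setminus\mathbb{Q}/\mathbb{Z}$ is a countable intersection of dense open sets, hence residual; intersecting with Yoccoz's residual set $\mathcal{C}$ and applying the Baire category theorem yields a Liouville $\alpha\in\mathcal{C}$. For such $\alpha$, Arnold's classical construction (developed further by Herman and Yoccoz) provides an $h_{0}\in\D^{\infty}(S^{1})$ with $\rho(h_{0})=\alpha$ that is not $C^{\infty}$-conjugate to the rotation $R_{\alpha}$. By the Herman--Yoccoz centralizer rigidity for circle diffeomorphisms, surjectivity of $\iota\colon Z^{\infty}(h_{0})\to S^{1}$ would produce a smooth $1$-parameter subgroup in $Z^{\infty}(h_{0})$ and hence a smooth linearization of $h_{0}$; therefore $\iota(Z^{\infty}(h_{0}))$ is a \emph{proper} subgroup of $S^{1}$.

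The first assertion, that $Z_{0}^{\infty}(h_{0})$ has the cardinality of the continuum, then follows immediately from Yoccoz's theorem. By the very definition of $Z_{0}^{\infty}(\alpha)$ and because $\rho(h_{0})=\alpha$, one has $Z_{0}^{\infty}(\alpha)\subseteq\iota(Z_{0}^{\infty}(h_{0}))$. Irrationality of $\alpha$ makes $\iota$ injective on $Z^{\infty}(h_{0})$, so $|Z_{0}^{\infty}(h_{0})|\geq|Z_{0}^{\infty}(\alpha)|=2^{\aleph_{0}}$; the reverse bound is immediate since $\D^{\infty}(S^{1})$ is separable.

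For the second assertion I use a purely topological observation about $S^{1}$: any proper subgroup $G\subsetneq S^{1}$ is totally path-disconnected. Indeed, if $G$ contained a nontrivial arc $(a,a+\varepsilon)$ then $G-G$ would contain an open neighborhood of $0$, which generates $S^{1}$ as a subgroup, contradicting $G\neq S^{1}$. Applying this to $G=\iota(Z^{\infty}(h_{0}))$, any continuous path $\gamma\colon[0,1]\to Z^{\infty}(h_{0})$ composed with the continuous injection $\iota$ becomes a continuous path into a totally path-disconnected subspace of $S^{1}$ and is therefore constant; by injectivity of $\iota$, $\gamma$ itself is constant.

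The main obstacle lies in the production of $h_{0}$ in the first paragraph: one must extract from the literature on one-dimensional dynamics --- Arnold's non-linearizability construction for Liouville rotation numbers together with the Herman--Yoccoz rigidity theorem relating smooth conjugation to $R_{\alpha}$ to the size of $\iota(Z^{\infty}(h_{0}))$ --- the fact that for a Liouville $\alpha$ there exists a smooth circle diffeomorphism of rotation number $\alpha$ whose smooth centralizer embeds into $S^{1}$ as a proper subgroup. The rest of the argument is a clean combination of Yoccoz's theorem with the elementary fact that every proper subgroup of $S^{1}$ is totally path-disconnected.
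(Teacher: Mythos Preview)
Your overall strategy coincides with the paper's: choose a Liouville $\alpha$ in Yoccoz's residual set $\mathcal C$, take a non-linearizable $h_0$ with $\rho(h_0)=\alpha$ (Herman), deduce the cardinality statement from $\alpha\in\mathcal C$, and then argue that $\iota\circ\gamma$ must be constant. Your packaging of the last step via ``every proper subgroup of $S^1$ is totally path-disconnected'' is a pleasant abstraction, but it is equivalent to the paper's direct argument.

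There is, however, a genuine gap in your justification that $\iota(Z^\infty(h_0))\subsetneq S^1$. You write that surjectivity of $\iota$ ``would produce a smooth $1$-parameter subgroup in $Z^\infty(h_0)$''. This does not follow: $\iota$ is a continuous injective group homomorphism, but you have no control over $\iota^{-1}$, so an algebraic isomorphism $Z^\infty(h_0)\cong S^1$ gives no continuity (let alone smoothness) for the resulting homomorphism $\mathbb R\to Z^\infty(h_0)$ in the $C^\infty$ topology. There is no ``centralizer rigidity'' theorem in the literature that supplies this step as you state it.

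The fix is exactly what the paper does (and it feeds directly into your proper-subgroup framework): if $\iota(Z^\infty(h_0))=S^1$, then in particular some $f\in Z^\infty(h_0)$ has \emph{Diophantine} rotation number; by Yoccoz's linearization theorem $f$ is $C^\infty$-conjugate to a rotation, and since $h_0$ commutes with $f$, the same conjugacy sends $h_0$ to an element of the $C^0$-centralizer of an irrational rotation, hence to a rotation, contradicting non-linearizability of $h_0$. With this correction your argument is complete and essentially identical to the paper's, just reorganized so that the Diophantine step is used once to prove properness rather than inside the path argument.
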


\begin{remark}
We recall that for each Liouville number $\alpha$ there is a non-$C^\infty$-linearizable
diffeomorphism of $S^1$ whose rotation number is $\alpha$. This result was proved by Herman in 
\cite[Chapter XI]{Her}. Yoccoz proved that, on the contrary, all diffeomorphisms with a diophantine 
rotation number are $C^\infty$-linearizable \cite{Yoc0}.
  
\end{remark}

\begin{proof}
Since the set of Liouville numbers is residual 
and the intersection of two residual sets
is residual, we can find an element $h_0\in \D^\infty(S^1)$ whose rotation number 
$\alpha = \rho(h_0)$ is Liouville and belongs to the set $\mathcal C$ in the above theorem, 
and such that $h_0$ is not linearizable. Since $\alpha\in \mathcal C$ we deduce that there are 
non trivial elements of $Z^\infty(h_0)$ arbitrarily close to the identity in the $C^\infty$-topology.
Let $\gamma\colon [0,1]\rightarrow Z^\infty(h_0)$ be a continuous path. Since the $C^\infty$-topology
is finer that the $C^0$-topology, it is sufficient to prove that if $\gamma$ is continuous 
when we consider on $Z^\infty(h_0)$ the topology of the uniform convergence, then the path $\gamma$
is constant. But this is equivalent to say that the composition 
$$
\rho\circ\gamma\colon [0,1]\rightarrow Z^\infty(h_0) \hookrightarrow Z^0(h_0) \cong S^1
$$
is constant. If this were not the case the image $(\rho\circ\gamma) [0,1]$ would contain diophantine 
rotation numbers. In particular there would be an element $f$ in $Z^\infty(h_0)$ which is 
$C^\infty$-conjugated to a rotation, but then the same conjugation would also linearize $h_0$
leading to a contradiction. 
\end{proof}

\begin{proof}[Proof of Theorem \ref{contra}]
Let $h_0$ be the element of $\D^\infty(S^1)$ given by the above corollary and let $\F$
be the foliation on $T^2$ obtained by suspension of $h_0$. Assume that $\D_{\F}$
is endowed with a structure of locally path-connected topological group such that the natural inclusion 
$\D_\F \hookrightarrow \D(T^2)$ is continuous. Let $U$ be a neighborhood of 
the identity in $\D(T^2)$ such that the map $f\to P(f)$, given in \eqref{P}, is defined for 
$f\in U\cap \D_\F$. Because of Lemma~\ref{restriction}, $P(U\cap \D_\F)$
contains a neighborhood of the identity in $Z^\infty(h_0)$. This implies that 
$U\cap \D_\F$ has an uncountable number of path-connected components.
Therefore $\D_\F$ cannot be second countable.
\end{proof}

\begin{remark}
Theorem \ref{contra} provides an example of a closed subgroup of $\D(T^2)$ for which the LPSAC-topology 
and the induced topology do not coincide. More precisely, the induced topology of the subgroup is not LPSAC 
and its LPSAC-topology is not countable. This shows in particular that condition b) in Proposition~\ref{two-top}
is not always fulfilled.
%
%
\end{remark}


%

\section{The automorphism group of a transversely holomorphic foliation}
\label{TH}
Throughout this section we assume that the foliation  
$\F$ on the compact manifold $M$ is transversely holomorphic, 
of (real) dimension $p$ and complex codimension $q$.
This means that the atlas of adapted local charts $\{(U_i, \varphi_i)\}$
can be chosen as taking values in $\mathbb R^p\times\C^q$ and that the 
maps $\{\gamma_{ij}\}$ fulfilling the cocycle condition (\ref{cocycle}) are
local holomorphic transformations of $\C^q$.
 
The transverse complex structure of $\F$ induces a complex structure on the normal 
bundle $\nu\F= TM/T\F$ of the foliation, which is invariant by the holonomy.



We denote by $\AF$ the group of automorphisms of the foliation. That is, $\AF$
is the group of elements $f\in\DF$ which are transversely holomorphic in the sense
that, 
in adapted local
coordinates 
they are of the form
$$
f(x,z) = (f_1(x,z,\bar z), f_2(z))
$$
with $f_2 = f_2(z)$ holomorphic transformations.


Let $\aF$ denote the Lie algebra of (real) vector fields $\xi$ on $M$ whose flows are one-parameter subgroups
of $\AF$. In particular, $\aF$ is 
the Lie subalgebra of $\X_\F$. 
%
%

In adapted local coordinates $(x,z)$, a (real) vector field $\xi\in \X$ is written in the form 
\begin{equation}\label{fol_vec}
\xi = \sum_i a^i(x,z,\bar z) \frac{\partial}{\partial x^i} + \sum_j b^j(x, z, \bar z) \frac{\partial}{\partial z^j}
+ \sum_j \overline b^j(x, z, \bar z) \frac{\partial}{\partial \bar z^j},
\end{equation}
where the functions $a^i$ are real whereas $b^j$ are complex-valued.
Then $\xi$ is a foliated vector field, i.e. it belongs to $\X_\F$, if $b^i$ are basic functions, that is if $b^i=b^i(z, \bar z)$.
And $\xi$ is a holomorphic foliated vector field, i.e. it belongs to $\aF$, if $b^i$ are 
holomorphic basic functions, that is if $b^i=b^i(z)$.

Notice that $\D_{L}$ is a normal subgroup of $\AF$ and that $\X_{L}$ is an ideal of $\aF$. 
We denote by $\G = \aF/\X_{L}$ the quotient Lie algebra.
Notice also that   $\D_{L}$ is not necessarily closed in $\AF$
as far as the leaves of $\F$ are not closed.

We will suppose that a Riemannian metric $g$ on $M$, that will play an auxiliary role, has also been fixed. 
\medskip


We recall now some facts concerning
transversely holomorphic foliations that we will use in the sequel 
(for the omitted details we refer to \cite{DK}).
The transverse complex structure of $\F$ induces an almost complex structure on the normal
bundle $\nu\F$ and therefore a splitting of the complexified normal bundle in the standard way
$$
\nu^{\mathbb C} \F = \nu^{1,0} \otimes \nu^{0,1}.
$$
There is a short exact sequence of complex vector bundles
$$
0 \longrightarrow L \longrightarrow T^{\mathbb C} M \overset{\pi^{1,0}} \longrightarrow \nu^{1,0}
\longrightarrow 0
$$
where $\pi^{1,0}$ denotes the composition of natural projections 
$$
T^{\mathbb C}M \to \nu^{\mathbb C} \F \to \nu^{1,0}
$$ 
and $L$ is the kernel of $\pi^{1,0}$. In local adapted
coordinates $(x, z)$, the bundle $\nu^{1,0}$ is spanned by the (classes of) vector fields 
$[\partial/ \partial z^j]$. 
In a similar way, the bundle $L$ is spanned by the vector fields 
$\partial/ \partial x^i, \partial/\partial \bar z^j$, and the dual bundle $L^\ast$ is spanned by the 
(classes of) 1-forms $[dx^i, d\bar z^j]$. 
For a given $k\in \mathbb N$, denote 
$$
\A^k = \Gamma\Big(\bigwedge^k L^\ast \otimes \nu^{1,0}\Big).
$$
%
%
The exterior derivative $d$ induces 
differential operators of order $1$ (cf. \cite{DK})
$$
d^k_\F \colon \A^{k} \longrightarrow \A^{k+1}.
$$
In adapted local coordinates, the operators $d^k_\F$ act as follows

\begin{equation*}
\begin{split}
d^k_\F \Big[\sum\alpha^{\ell}_{IJ} dx^I\wedge d\bar z^J&\otimes \dfrac{\partial}{\partial z^\ell}\Big] 
= 
\sum \Big[d(\alpha^{\ell}_{IJ} dx^I\wedge d\bar z^J)\Big] \otimes \Big[ \dfrac{\partial}{\partial z^\ell}  \Big]  \\[2mm]
&= \sum \Big[ \big(\dfrac{\partial \alpha^{\ell}_{IJ}}{\partial x^i} dx^i + 
\dfrac{\partial \alpha^{\ell}_{IJ}}{\partial \bar z^j}d\bar z^j \big) \wedge
dx^I\wedge d\bar z^J \otimes \dfrac{\partial}{\partial z^\ell}\Big].
\end{split}
\end{equation*}

Notice that $\A^0$ is just the space of smooth sections of the vector bundle $\nu^{1,0}$ and that
$\G$ is the kernel of the map $d^0_\F \colon \A^{0} \longrightarrow \A^{1}$.
One has

\begin{proposition}[\cite{DK}]\label{ell}
The sequence 
\begin{equation}
0 \longrightarrow \A^0  \overset{d^0_\F}\longrightarrow \A^1  \overset{d^1_\F}\longrightarrow \A^2 
\overset{d^2_\F}\longrightarrow \ldots \longrightarrow \A^{p+q} \longrightarrow 0
\end{equation}
is an elliptic complex.
\end{proposition}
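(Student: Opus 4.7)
The plan is to verify the two defining properties of an elliptic complex: (i) $d^{k+1}_\F \circ d^k_\F = 0$ for every $k$, and (ii) the principal symbol sequence is exact at every nonzero real covector.

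\textbf{Step 1 (it is a complex).} In an adapted chart, the subbundle $L \subset T^{\C}M$ is spanned by $\partial/\partial x^i$ and $\partial/\partial \bar z^j$. Because the transition cocycle $\{\gamma_{ij}\}$ of $\F$ is holomorphic, this span is intrinsic and $L$ is a globally defined, involutive complex subbundle of $T^{\C}M$. Hence the exterior derivative descends to a differential $d_L$ on $\Gamma(\bigwedge^{\!*} L^*)$ satisfying $d_L^2=0$, and the given local formula identifies $d^k_\F$ with $d_L$ tensored with the local trivialization of $\nu^{1,0}$ by the classes $[\partial/\partial z^\ell]$. Since the transition functions of $\nu^{1,0}$ are holomorphic they are $d_L$-closed, so the construction is coordinate-independent and $d^{k+1}_\F \circ d^k_\F=0$ reduces to $d_L^2=0$.

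\textbf{Step 2 (symbol).} Reading off the local formula, the principal symbol of $d^k_\F$ at $\xi\in T^*_mM$ acts by wedge product,
$$
\sigma_{d^k_\F}(\xi)\bigl(\omega\otimes \partial/\partial z^\ell\bigr) = p(\xi)\wedge\omega\otimes \partial/\partial z^\ell,
$$
where $p\colon T^{*\C}M\to L^*$ is the surjection dual to $L\hookrightarrow T^{\C}M$. The symbol sequence at $\xi$ is therefore the Koszul-type complex
$$
0 \to \nu^{1,0}_m \xrightarrow{p(\xi)\wedge} L^*_m\otimes\nu^{1,0}_m \xrightarrow{p(\xi)\wedge} \bigwedge^{\!2} L^*_m\otimes\nu^{1,0}_m \to \cdots \to \bigwedge^{\!p+q} L^*_m\otimes\nu^{1,0}_m \to 0.
$$

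\textbf{Step 3 (exactness for $\xi\neq 0$).} For any nonzero element $v$ of a finite-dimensional complex vector space $W$, extending $v$ to a basis shows that $0\to \C\xrightarrow{v\wedge} W\xrightarrow{v\wedge}\bigwedge^{\!2}W\to\cdots$ is exact, and tensoring with $\nu^{1,0}_m$ preserves exactness. So it suffices to show $p(\xi)\neq 0$ whenever $\xi$ is a nonzero real covector. In the decomposition $T^{*\C}M = (T\F)^{*\C}\oplus(\nu^{1,0})^*\oplus(\nu^{0,1})^*$, the map $p$ is the identity on the first and third summands and annihilates the second. If $p(\xi)=0$, both the $(T\F)^{*\C}$ and the $(\nu^{0,1})^*$ components of $\xi$ vanish; reality of $\xi$ forces its $(\nu^{1,0})^*$ component to be the complex conjugate of its $(\nu^{0,1})^*$ component, which now also vanishes, so $\xi=0$. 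The main obstacle is Step 1: turning the coordinate formula into a genuinely intrinsic operator and verifying $d_L^2=0$ off-chart; once that is done, Steps 2 and 3 are routine linear algebra.
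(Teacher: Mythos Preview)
Your argument is correct. The paper does not supply its own proof of this proposition: it is quoted from Duchamp--Kalka \cite{DK} and used as a black box, so there is no ``paper's approach'' to compare against. What you have written is essentially the standard verification one would give: involutivity of $L$ (which is where the transverse holomorphicity enters) makes $(\A^\bullet,d^\bullet_\F)$ a genuine complex, and the symbol sequence is a Koszul complex whose exactness for real $\xi\neq 0$ boils down to the observation that the kernel of $T^{*\C}M\to L^*$ contains no nonzero real covector. One cosmetic point: the splitting $T^{*\C}M=(T\F)^{*\C}\oplus(\nu^{1,0})^*\oplus(\nu^{0,1})^*$ you invoke is not canonical globally, but you only use it pointwise in an adapted chart, where it is just the coordinate basis $dx^i,\,dz^j,\,d\bar z^j$; it would be slightly cleaner to phrase Step~3 directly in those coordinates.
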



The elements of the quotient Lie algebra $\G = \aF/\X_{L}$
are naturally identified to the holomorphic basic sections of the normal bundle
$\nu^{1,0}$, and are called holomorphic basic vector fields. In particular $\G$ is 
a complex Lie algebra in a natural way.
Using the Riemannian metric on $M$, we can identify $\G$ to the vector space 
$\X_N$ of those vector fields in $\X$ that are orthogonal to $\F$.
Notice that, although  $\X_N$ is not a Lie subalgebra of $\X$, 
there is a vector space decomposition $\X =\X_{L} \oplus \X_N$.

As a corollary of Proposition~\ref{ell} one obtains the following result, which was also
proved by a different method by X.~G\'{o}mez-Mont \cite{GM}.

\begin{proposition}[G\'{o}mez-Mont, and Duchamp and Kalka]
Let $\F$ be a transversely holomorphic foliation on a compact manifold $M$. 
The Lie algebra $\G$ of holomorphic basic vector fields has finite dimension.
\end{proposition}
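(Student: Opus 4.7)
The plan is to deduce finite-dimensionality of $\G$ as a direct consequence of the ellipticity of the Duchamp--Kalka complex stated in Proposition~\ref{ell}, together with the standard Hodge-theoretic fact that the cohomology groups of an elliptic complex over a compact manifold are finite-dimensional.

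The first step is to identify $\G$ with $\ker(d^0_\F\colon \A^0\to \A^1)$. This identification is already recorded in the paragraph immediately preceding the proposition: elements of $\G = \aF/\X_L$ correspond to holomorphic basic sections of $\nu^{1,0}$, which is precisely the space cut out by the equation $d^0_\F s = 0$, as one sees in adapted coordinates from the explicit formula for $d^0_\F$ (the vanishing of $\partial/\partial x^i$ and $\partial/\partial \bar z^j$ derivatives of the coefficients $\alpha^\ell$). Thus it suffices to show that $\ker d^0_\F$ is finite-dimensional as a complex vector space.

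Since $M$ is compact, equip the bundles $\bigwedge^k L^\ast \otimes \nu^{1,0}$ with Hermitian metrics (using, for instance, the auxiliary Riemannian metric $g$ together with a Hermitian metric on $\nu^{1,0}$). Let $(d^k_\F)^\ast$ denote the formal adjoint of $d^k_\F$. The zeroth Laplacian
\[
\Delta^0 = (d^0_\F)^\ast d^0_\F \colon \A^0 \longrightarrow \A^0
\]
is then an elliptic differential operator on sections of the Hermitian vector bundle $\nu^{1,0}$: this is exactly the ellipticity condition at the first spot of the complex in Proposition~\ref{ell}. Standard elliptic theory on compact manifolds then gives that $\ker \Delta^0$ is finite-dimensional, and a one-line integration by parts shows $\ker \Delta^0 = \ker d^0_\F$, since
\[
\langle \Delta^0 s, s\rangle = \|d^0_\F s\|^2.
\]
Consequently $\G \cong \ker d^0_\F$ is a finite-dimensional complex vector space.

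No genuine obstacle appears: Proposition~\ref{ell} is doing essentially all of the work, and the only verification needed is the pointwise identification of $\G$ with $\ker d^0_\F$, which follows at once from the local coordinate expression of $d^0_\F$. The statement can thus be presented as a short corollary of the ellipticity of the basic Dolbeault-type complex associated with $\F$.
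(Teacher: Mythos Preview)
Your proposal is correct and follows precisely the route the paper indicates: the paper presents the proposition as a corollary of Proposition~\ref{ell} (ellipticity of the Duchamp--Kalka complex), and your argument supplies exactly the standard Hodge-theoretic details that turn ellipticity of the complex into finite-dimensionality of $\ker d^0_\F = \G$. There is nothing to add.
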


We denote by $G$ the simply-connected complex Lie group whose Lie algebra is $\G$.
%


Let $\D_{L,0}$ denote the connected component of $\D_{L}$ containing the identity. 
Our main result concerning the structure of $\AF$ is the following

\begin{theorem}\label{LieGroup_1}
Let $\F$ be a transversely holomorphic foliation on a compact manifold $M$. 
Endowed with the topology induced by $\D$, the automorphism group $\AF$ of 
automorphisms of $\F$ is
a closed, strong ILH-Lie subgroup of $\D$ with Lie algebra $\aF$. 
Moreover, the left cosets of the subgroup $\D_{L,0}$  
define a Lie foliation $\F_{\mathcal D}$ on $\AF$, which is transversely modeled on 
the simply-connected complex Lie group $G$
associated to the Lie algebra $\G$.
\end{theorem}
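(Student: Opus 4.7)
The plan is to apply Omori's Frobenius theorem (Theorem~\ref{Frobenius}) to a pair of operators extracted from the elliptic complex of Proposition~\ref{ell}, and then to identify the resulting strong ILH-Lie subgroup with $\AF$. Let $\pi^{1,0}\colon \Gamma(TM)\to \A^0=\Gamma(\nu^{1,0})$ be the $\R$-linear bundle map obtained by composing the projection $TM\to \nu\F$ with the extraction of the $(1,0)$-part. In the adapted coordinates of~\eqref{fol_vec} one reads $\pi^{1,0}(\xi)=\sum_j b^j[\partial/\partial z^j]$. Set
\[
A \,=\, d^0_\F\circ \pi^{1,0}\colon \Gamma(TM)\longrightarrow \A^1,\qquad
B \,=\, d^1_\F\colon \A^1\longrightarrow \A^2.
\]
Using the local formula for $d^0_\F$, the equation $A\xi=0$ becomes $\partial b^j/\partial x^i=0$ and $\partial b^j/\partial \bar z^k=0$, i.e.\ $b^j=b^j(z)$, so that $\ker A=\aF$, which is manifestly a Lie subalgebra. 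The identity $BA=(d^1_\F d^0_\F)\circ \pi^{1,0}=0$ is immediate from Proposition~\ref{ell}. Choosing Hermitian metrics on $\nu^{1,0}$ and on $\bigwedge^{\bullet}L^\ast\otimes \nu^{1,0}$ so that $\pi^{1,0}(\pi^{1,0})^\ast=\id_{\A^0}$, one checks that $AA^\ast+B^\ast B = d^0_\F(d^0_\F)^\ast+(d^1_\F)^\ast d^1_\F$ is the degree-one Laplacian of the elliptic complex of Proposition~\ref{ell}, hence elliptic. Theorem~\ref{Frobenius} then yields a strong ILH-Lie subgroup $H\subset \D$ with Lie algebra $\aF$.

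It remains to show that, as a subspace of $\D$, $\AF$ is a closed strong ILH-Lie subgroup whose identity component is $H$. The inclusion $H\subset \AF$ is clear, since flows of elements of $\aF$ are transversely holomorphic foliation-preserving diffeomorphisms. The reverse inclusion near $\id$ is obtained as in the proof of Proposition~\ref{d-tangent}: one picks a connection on $M$ whose associated exponential map preserves both $\F$ and its transverse holomorphic structure, and uses it to produce, for each $k$, a Hilbert chart of the Sobolev completion $\AF^k$ around $\id$; this shows that $\AF^k$ is locally path-connected and hence that its identity component is $H^k$. The set $\AF$ is closed in $\D$ because being a transversely holomorphic foliation-preserving diffeomorphism is preserved under $C^\infty$-convergence, and the LPSAC topology of $\AF$ is second countable because near $\id$ it is modelled on the separable Fr\'echet space $\aF$, while the component group is countable (using the finite dimensionality of $\G$ to reduce to the mapping class group of~$\D$). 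Proposition~\ref{two-top} then identifies the LPSAC topology of $\AF$ with the topology induced from $\D$.

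For the last assertion, the finite dimensionality of $\G=\aF/\X_{L}$ (G\'omez-Mont, Duchamp and Kalka) provides $\X_{L}$ with a closed topological complement in $\aF$, and in particular a continuous linear (hence smooth) projection $p\colon \aF\to \G$. Let $\omega_L$ be the left-invariant Maurer-Cartan form of the Fr\'echet Lie group $\AF$ with values in $\aF$, and set $\omega = p\circ \omega_L$, a smooth $\G$-valued $1$-form on $\AF$. At $g\in \AF$ one has $\omega_g=p\circ dL_{g^{-1}}$, which is surjective onto $\G$ and whose kernel is $dL_g(\X_{L})=T_g(g\,\D_{L,0})$; hence $\ker \omega$ is exactly the distribution tangent to the left cosets of $\D_{L,0}$. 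Because $\X_{L}$ is an ideal of $\aF$, $p$ is a Lie algebra homomorphism, and the Maurer-Cartan equation $d\omega_L+\tfrac12[\omega_L,\omega_L]=0$ descends to $d\omega+\tfrac12[\omega,\omega]=0$ in $\G$. Thus $\omega$ defines the desired Lie foliation $\F_{\mathcal D}$ on $\AF$, with leaves the left cosets of $\D_{L,0}$ and transversely modelled on the simply-connected complex Lie group $G$.

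The main technical obstacle lies in the second step: producing, via an exponential map for a suitably chosen connection, a local chart of each Hilbert completion $\AF^k$ at $\id$ that matches the chart of $H^k$ given by Omori's theorem, so that the two Hilbert-manifold structures coincide and the hypotheses of Proposition~\ref{two-top} are verified. Once this identification is in place, the remaining assertions follow from the elliptic-complex formalism, Omori's Frobenius theorem, and the standard Maurer-Cartan description of Lie foliations.
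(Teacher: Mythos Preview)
Your first step---applying Theorem~\ref{Frobenius} to $A=d^0_\F\circ\pi^{1,0}$ and $B=d^1_\F$---is exactly the paper's Proposition~\ref{P0}, and is correct.

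The genuine gap is in your second step. You assert that one can ``pick a connection on $M$ whose associated exponential map preserves both $\F$ and its transverse holomorphic structure'' and use it to chart $\AF^k$. No such connection is exhibited, and in general none exists: a transversely holomorphic foliation need not carry a transverse holomorphic (or even transverse affine) connection invariant under holonomy. This is precisely the obstruction that invalidated Leslie's argument for $\D_\F$ (as the paper recalls), and it does not disappear by restricting to $\AF$. You yourself flag this as ``the main technical obstacle,'' but the proposal offers no mechanism to overcome it; the argument therefore does not go through.

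The paper bypasses the connection issue entirely. Instead of an exponential chart, it builds by hand (Proposition~\ref{P1}) a continuous splitting $\mathcal V\cong \mathcal V_L\times\tilde\Sigma$ of a neighborhood of the identity in $\AF$, where $\Phi(f)=\bar f$ is the ``closest point on the slice through $f(w)$'' map. The crucial new ingredient---absent from your proposal---is showing that the transverse part $\tilde\Sigma$ is a neighborhood of $e$ in a \emph{finite-dimensional} Lie group. This is done by identifying $\tilde\Sigma$ with a local group $\mathcal S$ of holomorphic maps commuting with the holonomy pseudogroup, proving $\mathcal S$ is locally compact via Montel's theorem (Lemma~\ref{montel}), and then invoking Bochner--Montgomery to conclude $\mathcal S$ is a local Lie group with Lie algebra $\G$. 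Once $\tilde\Sigma\cong\Sigma\subset G$ is established, the product $\mathcal V_L^k\times\Sigma$ provides the Hilbert chart of $\operatorname{Aut}^k(\F)$ that you were seeking, and the identification $\operatorname{Aut}'(\F)=\operatorname{Aut}_0(\F)$ follows as in Proposition~\ref{d-tangent}. The rigidity of holomorphic maps (normal families) is what replaces the missing connection.

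Your Maurer--Cartan description of the Lie foliation $\F_{\mathcal D}$ is a clean alternative to the paper's explicit construction of adapted charts $\hat\Psi_f\colon\mathcal V_{L,f}\times\Sigma_f\to\mathcal V_f$, and would work once the Fr\'echet Lie group structure on $\AF$ is in place---but that structure is exactly what the second step was supposed to supply.
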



We recall that a Lie foliation, modeled on a Lie group $G$, is defined 
by local submersions $\Phi_i$ with values on $G$ fulfilling $\Phi_j = L_{\gamma_{ji}}\circ \Phi_i$, 
where $\gamma_{ij}$ is a locally constant function with values in  $G$
and $L_{\gamma_{ij}}$ denotes left translation by $\gamma_{ij}$.
\medskip

The proof of the above theorem will be given in several steps:
\begin{enumerate}[1)]
\item First, we use Theorem~\ref{Frobenius}
to show that there is a strong ILH-Lie subgroup 
$\operatorname{Aut}'(\F)$ of $\D$, contained in $\operatorname{Aut}(\F)$ and whose Lie algebra is $\aF$.
Note that this is not sufficient to assure that $\operatorname{Aut}'(\F)$ coincides with the 
connected component of the identity
of $\operatorname{Aut}(\F)$.
\item We consider the group $\operatorname{Aut}(\F)$ endowed with the topology induced by $\D$ and the
group $\D_{L}$ with the Fr\'{e}chet topology given by Proposition~\ref{d-tangent}, and we fix small enough 
neighborhoods of the identity
$\mathcal V$ and $\mathcal V_L$ of these two groups. We construct a continuous map
$\Phi\colon \mathcal V\to \D$ with image $\tilde\Sigma = \Phi(\mathcal V)$ contained in  $\operatorname{Aut}(\F)$
and with the property that $\mathcal V$ is homeomorphic to the product 
$\mathcal V_{L}\times \tilde\Sigma$.
\item We show that $\tilde \Sigma$ is naturally identified to a neigbourhood $\Sigma$ of $e$ in the Lie group $G$.
This implies that the induced topology of $\operatorname{Aut}(\F)$ is LPSAC and reasoning as in the proof of 
Proposition~\ref{d-tangent} we conclude that, with the induced topology, $\operatorname{Aut}(\F)$ is 
a closed strong ILH-Lie subgroup of $\D$, proving the first part of the Theorem.
\item Finally, we prove that the map $\Phi$ is smooth with respect to the Fr\'{e}chet structure of $\operatorname{Aut}(\F)$
and that the identification of $\mathcal V$ with $\mathcal V_{L}\times \Sigma$ is a local chart of 
$\operatorname{Aut}(\F)$ defining the Lie foliation in a neighborhood of the identity. A similar construction of 
local charts around any element of $\operatorname{Aut}(\F)$ completes the proof.
\end{enumerate}

%
%
%

Thus we begin with showing the following statement.

\begin{proposition}\label{P0}
There is a strong ILH-Lie subgroup $\operatorname{Aut}'(\F)$ of $\D$ whose Lie algebra is $\aF$.
It is contained in $\AF$.
\end{proposition}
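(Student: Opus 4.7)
The plan is to apply Omori's Frobenius theorem (Theorem~\ref{Frobenius}) using operators extracted from the elliptic complex of Proposition~\ref{ell}. Let $\pi\colon \Gamma(TM)\to \A^0=\Gamma(\nu^{1,0})$ be the real-linear restriction of $\pi^{1,0}$ to real vector fields (viewing $TM\subset T^{\mathbb C}M$), and set
$$
A = d^0_\F\circ \pi \colon \Gamma(TM)\longrightarrow \A^1, \qquad B = d^1_\F \colon \A^1\longrightarrow \A^2.
$$
Both are differential operators of order $1$ with smooth coefficients. A short computation in adapted local coordinates using \eqref{fol_vec} and the explicit formula for $d^0_\F$ identifies $\ker A$ with $\aF$: indeed $\pi(\xi)$ has local representative $\sum_j b^j\,\partial/\partial z^j$ modulo $L$, and $d^0_\F\pi(\xi)=0$ forces each $b^j$ to be independent of $x$ and $\bar z$ and holomorphic in $z$, which is exactly the characterisation of $\aF$ given just after \eqref{fol_vec}.

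Conditions (i) and (iii) of Theorem~\ref{Frobenius} are straightforward: (i) follows from $d^1_\F d^0_\F=0$, and (iii) from the fact that the bracket of two vector fields whose transverse components are holomorphic basic sections of $\nu^{1,0}$ is again of that form, checked in local coordinates. The main technical point I expect is condition (ii), the ellipticity of $AA^{\ast}+B^{\ast}B$. Since $\pi$ is a surjective bundle map of order zero, $\pi\pi^{\ast}$ is a positive self-adjoint bundle isomorphism of $\nu^{1,0}$, so the principal symbol of $AA^{\ast}+B^{\ast}B$ at a nonzero covector $\xi$ is
$$
\sigma(d^0_\F)(\xi)\,\pi\pi^{\ast}\,\sigma(d^0_\F)(\xi)^{\ast} + \sigma(d^1_\F)(\xi)^{\ast}\sigma(d^1_\F)(\xi).
$$
Pairing a hypothetical symbol kernel vector $v\in \A^1$ with itself forces $\sigma(d^0_\F)(\xi)^{\ast}v=0$ and $\sigma(d^1_\F)(\xi)v=0$. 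The ellipticity of the Duchamp--Kalka complex from Proposition~\ref{ell} then places $v$ simultaneously in and orthogonal to $\operatorname{Im}\sigma(d^0_\F)(\xi)$, forcing $v=0$. This is the only substantive step; the rest is bookkeeping.

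With all hypotheses checked, Theorem~\ref{Frobenius} produces a strong ILH-Lie subgroup $\operatorname{Aut}'(\F)\subset \D$ with Lie algebra $\aF$, realised as an integral submanifold of the right-translated distribution $\{dR_f\,\aF\}$. To conclude $\operatorname{Aut}'(\F)\subset \AF$, I would argue by a flow argument using that the topology of $\operatorname{Aut}'(\F)$ is LPSAC: any element $f$ is joined to the identity by a piecewise smooth path $c(t)$ whose right-translated velocity $dR_{c(t)^{-1}}\dot c(t)$ lies in $\aF$, so $c(t)$ is the flow of a time-dependent vector field in $\aF$. Each time-$t$ map of such a flow preserves both $\F$ and its transverse holomorphic structure, hence $f\in \AF$.
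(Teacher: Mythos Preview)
Your proposal is correct and follows essentially the same route as the paper: define $A=d^0_\F\circ\pi$ and $B=d^1_\F$, verify the hypotheses of Theorem~\ref{Frobenius} using the ellipticity of the Duchamp--Kalka complex, and then deduce containment in $\AF$ by the flow/integral-submanifold argument. The only cosmetic difference is that the paper normalises the auxiliary metrics so that $\pi\pi^\ast=\id$, which makes $AA^\ast+B^\ast B$ literally equal to the Laplacian $d^0_\F\delta^0+\delta^1 d^1_\F$ of the complex at degree~$1$, whereas you keep a general $\pi\pi^\ast$ and run the symbol argument directly; both are equivalent.
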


\begin{proof}
Let $\pi\colon \X =\Gamma(TM) \to \A^0 = \Gamma(\nu^{0,1})$ be the natural projection. That is, 
if $\xi\in\X$ is written in local adapted coordinates as in \eqref{fol_vec}, then 
$$
\pi (\xi) = \sum_j b^j(x, z, \bar z) \frac{\partial}{\partial z^j}.
$$ 
Then $\pi$ is a differential operator of order $0$ and its adjoint operator $\pi^\ast$,
constructed using suitable Riemannian metrics on $TM$ and $\nu^{0,1}$, is also a differential 
operator of order $0$
such that $\pi\pi^\ast =\id$.

We define the operator $A\colon \Gamma(TM) \to \A^1$ as the composition $A= d^0_\F\circ \pi$ and
we set $B= d^1_\F$. Then one has $B A=0$ and $\ker A = \aF$. Moreover the operator
$$
A\, A^\ast + B^\ast\, B = d^0_\F \circ \delta^0 + \delta^1\circ d^1_\F,
$$
where $\delta^k$ are formal adjoints of $d^k_\F$, is elliptic because of Proposition~\ref{ell}.
Then Theorem~\ref{Frobenius} proves the existence of a strong ILH-Lie subgroup 
$\operatorname{Aut}'(\F)$ of $\D$ with Lie algebra is $\aF$. The fact that 
$\operatorname{Aut}'(\F)$ is an integral submanifold of the distribution $T\D$ obtained
by right translation of $\aF$ implies that each smooth curve $c=c(t)$ in $\operatorname{Aut}'(\F)$
starting at the identity fulfills an equation of type
$$
\dot c(t) = \xi_t \circ c(t)
$$
where $\xi_t$ is a curve in $\aF$. This implies that each diffeomorphism $c(t)$ preserves the 
transversely holomorphic foliation $\F$ showing that $\operatorname{Aut}'(\F)$
is contained in $\AF$.
\end{proof}

\begin{remark}
The above argument also shows that the 
Sobolev completion $\operatorname{Aut}'^k(\F)$ of $\operatorname{Aut}'(\F)$
is also a subgroup of $\operatorname{Aut}^k(\F)$. 
%
\end{remark}


\begin{proposition}\label{P1}
Let us consider $\AF$ endowed with the topology induced by $\D$, and $\D_{L}$ endowed with the 
Fr\'{e}chet topology given by Proposition~\ref{d-tangent}.
There are open neighborhoods of the identity $\mathcal V\subset \AF$ and 
$\mathcal V_L \subset \D_{L}$, and a continuous map 
$\Phi\colon\mathcal V\to  \D$, with image  $\tilde\Sigma = \Phi(\mathcal V)$ contained in $\AF$,
such that the multiplication map 
\begin{equation}\label{morph}
\begin{array}{rccc}
\Psi\colon& \mathcal V_L\times \tilde\Sigma &\longrightarrow & \mathcal V \\
 & (f, s) & \mapsto & f\circ s
\end{array}  
\end{equation}
is a homeomorphism
and the following diagram commutes
\begin{equation}\label{cdiag}
\xymatrix{
\mathcal V_L\times \tilde\Sigma \ar[r]^{\Psi} \ar[d]_{pr_2} & \mathcal V \ar[dl]^{\Phi} \\ 
\tilde\Sigma & 
}
\end{equation}
\end{proposition}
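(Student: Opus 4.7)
The plan is to construct $\tilde\Sigma$ as a finite-dimensional slice in $\AF$ transverse to the normal subgroup $\D_L$, apply the inverse function theorem to invert the product map $\Psi$ at each Sobolev level, and then match the resulting local chart with the topology induced by $\D$.

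First I would build the slice using the finite-dimensionality of $\G$. Identifying $\G\cong \aF/\X_L$ with the vector space $\X_N$ of vector fields orthogonal to $\F$ for the auxiliary metric, I set
$$
\sigma\colon V_0 \subset \X_N \longrightarrow \AF, \qquad \sigma(\xi)=\phi^{\xi}_{1},
$$
where $V_0$ is a small neighborhood of $0$ and $\phi^\xi_t$ denotes the flow of $\xi\in\X_N\subset\aF$. Each such flow lies in $\AF$, and the differential of $\sigma$ at $0$ is the identity on the finite-dimensional space $\X_N$. Hence $\sigma$ is a smooth embedding for small $V_0$, and I take $\tilde\Sigma=\sigma(V_0)$, a finite-dimensional submanifold of $\D$ contained in $\AF$ that requires no Sobolev completion.

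Next I would analyze the product map $\Psi(f,s)=f\circ s$ at the Hilbert level. Using the ILH structure, for each $k$ large enough $\Psi$ extends to a smooth map $\Psi^k\colon \mathcal V^k_L\times \tilde\Sigma\to \D^k$, where $\mathcal V_L^k$ is a neighborhood of $\id$ in the Hilbert manifold $\D^k_L$. Its differential at $(\id,\id)$ is the sum operator $(X,\xi)\mapsto X+\xi$, which is a linear isomorphism from $\X^k_L\oplus \X_N$ onto the closed complemented subspace $\aF^k\subset\X^k$. The inverse function theorem in the Hilbert category then shows that $\Psi^k$ is a $C^\infty$-diffeomorphism from a neighborhood of $(\id,\id)$ onto an open neighborhood of $\id$ in the Hilbert submanifold $\operatorname{Aut}'^k(\F)$ provided by Proposition~\ref{P0}. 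Intersecting over $k$ and using the Sobolev chain structure, $\Psi$ restricts to a homeomorphism onto a neighborhood of $\id$ in the Fr\'{e}chet Lie group $\operatorname{Aut}'(\F)$.

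The hard step is to match this with the topology induced on $\AF$ by $\D$, so that the image is a neighborhood of $\id$ in $\AF$ itself and not merely in the possibly stronger LPSAC-topology of $\operatorname{Aut}'(\F)$. Following the scheme of the proof of Proposition~\ref{d-tangent}, I would choose an auxiliary connection on $M$ that is parallel for $\F$ and preserves its transverse complex structure; its exponential map then provides a Hilbert-manifold chart on $\operatorname{Aut}^k(\F)$, which makes the latter locally path-connected. Any piecewise $C^1$ path in $\operatorname{Aut}^k(\F)$ issued from $\id$ has tangent vectors in $\aF^k$ and, by the Frobenius construction underlying Proposition~\ref{P0}, is therefore trapped in $\operatorname{Aut}'^k(\F)$; hence near $\id$ the two sets coincide, and the image of $\Psi^k$ is a neighborhood of $\id$ in $\operatorname{Aut}^k(\F)$ with the topology induced by $\D^k$. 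Taking inverse limits gives the same statement for $\AF$ with the topology induced by $\D$, yielding the open set $\mathcal V$. I then define $\Phi = pr_2\circ \Psi^{-1}\colon \mathcal V\to \tilde\Sigma$, which is continuous and makes the diagram commute by construction.
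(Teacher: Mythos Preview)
Your steps 1 and 2 are essentially sound and do produce a homeomorphism onto a neighborhood of the identity in $\operatorname{Aut}'(\F)$ with its LPSAC topology. The gap is in step~3, and it is precisely the point the proposition is designed to handle.

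You write that you would ``choose an auxiliary connection on $M$ that is parallel for $\F$ and preserves its transverse complex structure; its exponential map then provides a Hilbert-manifold chart on $\operatorname{Aut}^k(\F)$.'' This claim is not justified. For $\D_L$ the analogous statement works because a connection under which the leaves are parallel makes the condition ``$f$ preserves each leaf'' equivalent to a pointwise linear condition on the associated vector field. But the condition ``$f$ is transversely holomorphic'' is \emph{not} of this type: the Riemannian exponential $x\mapsto\exp_x(\xi(x))$ of a vector field $\xi\in\aF^k$ is in general not transversely holomorphic, regardless of how the connection is chosen, because the dependence of $\exp_x$ on the base point is not holomorphic in the transverse variables. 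So you have no chart on $\operatorname{Aut}^k(\F)$, hence no way to conclude it is LPSAC, and your argument that the image of $\Psi^k$ is open in $\operatorname{Aut}^k(\F)$ for the induced topology collapses. In effect you are assuming that $\operatorname{Aut}^k(\F)$ is already a Hilbert submanifold of $\D^k$, which is exactly what the chain of Propositions~\ref{P1}--\ref{P3} is meant to establish; the reasoning is circular.

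The paper avoids this by constructing $\Phi$ \emph{explicitly and pointwise}, without any inverse function theorem and without assuming anything about the manifold structure of $\AF$. For $f\in\AF$ close to the identity in the topology induced by $\D$, one sets $\bar f(w)$ to be the point on the local slice through $f(w)$ that is nearest to $w$, realized concretely via the normal exponential map as
\[
\bar f(w)=p\bigl(\psi_i^{-1}(\hat f_i(\phi_i(w)),\,w)\bigr),
\]
where $\hat f_i$ is the transverse holomorphic part of $f$ in a cubic adapted chart. One then checks directly that $\bar f$ is a well-defined element of $\AF$, that $f_L:=f\circ\bar f^{-1}\in\D_L$, and that both depend continuously on $f$. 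This gives the decomposition $\mathcal V\cong\mathcal V_L\times\tilde\Sigma$ for the \emph{induced} topology immediately, with no appeal to $\operatorname{Aut}'(\F)$ at all. The identification of $\tilde\Sigma$ with a neighborhood of $e$ in $G$ and the equality $\operatorname{Aut}'(\F)=\operatorname{Aut}_0(\F)$ are then \emph{consequences} (Propositions~\ref{P2} and~\ref{P3}), not inputs.
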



\begin{proof}
The idea for proving this proposition is to associate, to any given diffeomorphism  
$f\in\AF$ close enough to the identity, 
the element
$\bar f\in \AF$ fulfilling the following property: for each $w$, 
$\bar f(w)$ is the point of the local slice of the foliation through $f(w)$ which is 
at the minimal distance from $w$.
%
%
The mapping $\bar f$ is uniquely determined by that condition and 
the diffeomorphisms $f$ and $\bar f$ 
differ by an element of $\D_{L}$ close to the identity. Then $\tilde\Sigma$ will be the set of the diffeomorphisms
$\bar f$ obtained in this way. In order to carry out this construction in a rigorous way we need to introduce
appropriate families of adapted local charts.

We say that a local chart $(U,\varphi)$ of $M$ adapted to $\F$ is cubic 
if its image  
$\varphi(U)\subset \mathbb R^p\times \C^q$ is the product $C\times\Delta$
of a cube $C\subset\mathbb R^p$ and a polydisc $\Delta\subset\C^q$, and we say that it is 
centered at $w_0\in U$ if $\varphi(w_0) = (0,0)$.  The submersion $\phi=pr_2\circ \varphi$ 
maps $U$ onto $\Delta$ and the slices $S_z = \phi^{-1}(z)$ in $U$ are copies of $C$.

Let $N\F\cong \nu\F$ be the subbundle of $TM$ orthogonal to $T\F$ 
and let $p\colon N\F\rightarrow M$ be the natural projection.
We denote by $\exp\colon N\F\rightarrow M$ the geodesic exponential map
associated to the Riemannian metric $g$ and we set 
$W_\epsilon= \{(w,v)\in N\F\mid |v| <\epsilon\}$. Since $M$ is compact there is $\epsilon_1>0$
such that $T_w=\exp(p^{-1}(w)\cap W_{\epsilon_1})$ is a submanifold transverse to $\F$ for each $w\in M$.

Given a cubic adapted local chart 
$(U,\varphi)$, we set $W_{\epsilon,U}=W_\epsilon\cap p^{-1}(U)$ and 
we define $\psi_U\colon W_{\epsilon,U}\rightarrow \Delta\times M$ as the map 
\begin{equation}
\psi_U(w,v) = (\phi(w), \exp(w,v)).
\end{equation}
Then there are constants $0<\epsilon_0<\epsilon_1$ and $0<\delta_0$ such that,
if the diameter of $U$ fulfills $\operatorname{diam} U<\delta_0$, then $\psi_U$ is a 
diffeomorphism from $W_{\epsilon_0,U}$ onto its image.
We fix once and for all the values $\epsilon_0, \delta_0$ 
and we set $W_U=W_{\epsilon_0,U}$. We denote by $W_U^z$ the restriction of 
$W_U$ to the slice $S_z$, i.e. $W_U^z = \{(w,v)\in W_U \mid \phi(w) = z\}$.

Let $(U,\varphi)$ be a cubic adapted chart centered at $w_0$ and $U'$ an open subset 
of $U$. 
We say that the pair $(U, U')$ is {\emph{regular}} if the following conditions are fulfilled:
\smallskip

(R1) $\operatorname{diam} U<\delta_0$,

(R2) $w_0\in U'\subset \overline U'\subset U$ and $(U', \varphi|_{U'})$ is a cubic chart,

(R3) $\overline U'$ is contained in the image $\exp(W_U^z)$ for each $z\in \Delta$.
\smallskip

\noindent If the pair $(U, U')$ is regular then the intersection of a slice $S_z$ of $U$ 
with $U'$ is either empty or coincides with a slice of $U'$. Condition (R3) implies that, 
for each point $w\in U'$ and each slice $S_z$ of $U$ meeting $U'$, there is a unique point $w'\in S_z$ 
which is at the minimal distance between $w$ and $S_z$, namely
\begin{equation}\label{minimal}
w' = p\circ \psi_U^{-1}(z,w).
\end{equation}
Notice also that the submersion $\phi$
maps the intersection $T_w^{U'}= T_w\cap U'$ diffeomorphically onto the open 
polydisc $\phi(U')\subset\C^q$.

Since $M$ is compact, we can find positive numbers $0<\delta'<\delta <\delta_0$,  
%
a finite family of adapted and cubic local charts $\{(U_i, \varphi_i)\}_{ i=1,\dots, m}$, centered at points $w_i$,
and a family of 4-tuples 
\begin{equation}
\mathcal U = \{(U_i, U'_i, V_i, V'_i)\}_{ i=1,\dots, m}
\end{equation}
of open sets such that
\smallskip

(C1) $(U_i, U'_i)$ is a regular pair fulfilling $\operatorname{diam} U_i<\delta$ and $\delta'< d(w_i, \partial U'_i)$,

(C2) $\overline V_i\subset U'_i$, $(V_i, \varphi_i|_{V_i})$ is a cubic adapted local chart centered at $w_i$, 
$\operatorname{diam} V_i<\delta'/4$ and $(V_i, V'_i)$ is a regular pair,

(C3) the family $\{ V'_i\}_{ i=1,\dots, m}$ is an open covering of $M$.
\smallskip

\noindent We then set $\psi_i =\psi_{U_i}$, $W_i= W_{U_i}$ and $W_i^z= W_{U_i}^z$. 
We remark that such a covering has the following properties:
\smallskip

(P1) If $V_i\cap V_j \neq \emptyset$ then $\overline{V}_i \cup \overline{V}_j \subset U'_i\cap U'_j$,

(P2) given a point $w\in V'_i$ and a slice $S_z$ in $U_i$ which meets $V_i$,
the point $w'\in S_z$ that is at the minimal distance from $w$ belongs to $V_i$.
\medskip

We fix from now on such a family $\mathcal U$.
%
We denote by
$\mathcal V$ the open neighborhood of the identity in $\AF$ defined by
$f\in \mathcal V$ if and only if $f(\overline V'_i)\subset V_i$ for each $i=1,\dots m$.
We also define the subset $\mathcal V_L$ of $\D_{L}$ of those elements $f\in \D_{L}$
fulfilling $f(\overline V'_i)\subset V_i$ for each $i=1,\dots m$ and keeping fixed each 
slice of $V'_i$ (recall that a leaf of $\F$ can cut $V_i$ in many slices). The set 
$\mathcal V_L$ is open in the Fr\'{e}chet topology of $\D_{L}$. 
We are now ready to define the map $\Phi\colon \mathcal V \to \AF\subset\D$
and the set $\tilde\Sigma = \Phi(\mathcal V)$.
%
%
%
%

Let $f$ be a fixed element in $\mathcal V$. For a given point $w\in M$ we choose
$i=1,\dots ,k$ with $w\in V'_i$. Then $\hat w = f(w)\in V_i$ and we denote by $S_z$ and $S_{\hat z}$
the slices of $U_i$ through the points $w$ and $\hat w$ respectively, i.e. $z=\phi_i(w)$ 
and $\hat z=\phi_i(f(w))$. 
Then $S_{\hat z}$ meets  $V_i\subset U'_i$ and, since the pair $(U_i, U'_i)$ is regular,
there is a uniquely determined point 
$w'\in S_{\hat z}$ which is at the minimal distance from  $S_{\hat z}$ to $w$, 
moreover $w'\in V_i$. We then define
a local map $\bar f\colon V'_i\to V_i\subset M$ by $\bar f(w) = w'$. 
It follows from properties (P1) and (P2) that the above definition does not depend on the 
choice of  $i=1, \dots, m$. Indeed, if $w\in V'_i\cap V'_j$ then the points $w'$ constructed using $U_i$
or $U_j$ belong to $V_i \cap V_j$ and are necessarily the same. 
This fact implies that the map $\bar f$ is globally defined in 
the entire manifold $M$. 

Since $f$ preserves leaves and is transversely holomorphic, 
it induces a local holomorphic transformation $\hat f_i$  between open subsets of 
$\Delta_i = \phi_i(U_i) \subset \C^n$. 
Notice that, by construction, $\bar f$ sends leaves into leaves and that 
the local transverse part of $\bar f$ in $U_i$ is just $\hat f_i$.

Let us prove that $\bar f$ is a smooth diffeomorphism. We first notice that, if $w'=\bar f(w)$ with 
$w\in V'_i$, then the geodesic joining $w$ and $w'$ and giving the minimal distance
between $w$ and the slice $S_{z'}$, where $z'= \phi_i(w')$, meets $S_{z'}$
orthogonally. By applying \eqref{minimal}, we have
\begin{equation}\label{intermedi}
\bar f(w) = w' = p\circ\psi_i^{-1}(\phi_i(f(w)), w), 
\end{equation}
Using the local transverse part $\hat f_i$ of $f$ the above equality can be written as
%
%
%
%
%
\begin{equation}\label{themap}
\bar f(w) = w' = p\big(\psi_i^{-1}(\hat f_i(\phi_i(w)), w)\big).
\end{equation}
This equality proves that $\bar f$ is smooth and that $\bar f$ is entirely determined by the set 
of local transformations $\{\hat f_i\}$.

Restricted to $V'_i$ the map $\bar f$
has an inverse that can be described as follows. Given $w'=\bar f(w)\in V_i$
we put $z'= \phi_i(w')$ and $z= \phi_i(w)=\hat f_i^{-1} (z')$. As pointed out before, 
$\exp(N_{w'}\F\cap W_i)$ meets the slice $S_z$ and the intersection
is precisely the point $w\in S_z$. In fact, there is a 
unique vector
$v=v(w')\in N_{w'}\F\cap W_i$ such that  $w = \exp(w',v)$. Therefore we can write
$$
w = \bar f^{-1}(w') = \exp(w',v) = pr_2\circ \psi_i (w', v(w')) 
$$
and, since $\psi_i$ is a diffeomorphism, the vector
$v=v(w')$ depends smoothly on $w'$. Indeed, if we set $T= \exp(N_{w'}\F\cap W_i)\cap V_i$
then $\phi_i|_{T}$ maps $T$ isomorphically onto $\phi_i(V_i)\subset \Delta_i$ and we have
$$
(w', v(w')) = \psi_i^{-1}\big(z, (\phi_i|_{T})^{-1}(z)\big).
$$
We deduce in particular that $\bar f$ is locally injective 
and that its (local) inverse is smooth. This also proves that the map 
$\bar f$ is open and, as the manifold $M$ is compact, $\bar f$ is necessarily
surjective.

We remark that formula (\ref{themap}) also shows that $\bar f$ is entirely determined by the set 
of local transformations $\{\hat f_i\}$.
That formula also shows that
$\bar f$ depends continuously on $f\in\mathcal U$.
Since the transverse part of $\bar f$ is just $\hat f$, the map $\bar f$
is transversely holomorphic.  It remains to prove that $\bar f$ is globally 
injective and therefore an element of $\AF\subset \D$. 

Assume that $w' = \bar f(w_1) = \bar f(w_2)$ and choose $i, j \in \{1, \dots, m\}$
with $w_1\in V'_i$ and $w_2\in V'_j$. Then $V_i\cap V_j\neq \emptyset$ and therefore
$V_i\cap V_j\subset U'_i$. We deduce that $w_1, w_2$ belong to the same slice
$S_z$ of $U_i$ where $z = \hat f_i^{-1} (\phi_i(w'))$. Now $w_1, w_2$ are in the intersection 
of $S_z$ with the submanifold $\exp(N_{w'}\F\cap W_i)\cap V_i$. 
But this intersection reduces to a unique point, which shows that $w_1=w_2$.

Summarizing the above considerations, we see that the correspondence 
$f\mapsto \bar f  = \Phi(f)$
given by \eqref{themap} defines a continuous map $\Phi\colon \mathcal V \to \D$
with image $\tilde \Sigma = \Phi(\mathcal V)$ contained in $\AF$.
By construction, 
the diffeomorphism $f_L:= f\circ\bar f^{-1}$ is an element of $\mathcal V_L$
and $\Phi$ maps $\mathcal V_L$ to the identity. 
%
%
%
%
It is clear from the construction that each element
$f\in\mathcal V$ decomposes in a unique way as the composition 
$$
f = f_L \circ  \bar f
$$
with $\bar f = \Phi(f)\in \tilde\Sigma$ and $f_L\in \mathcal V_L$ and that $\bar f$ and $f_L$
depend continuously on $f$, with respect to the Fr\'{e}chet topology. 
This tells us that the composition map 
$\Psi\colon\mathcal V_L\times \tilde\Sigma \rightarrow  \mathcal V$ defined in (\ref{morph})
has a continuous inverse. Therefore, by shrinking the set $\mathcal V_L$ if it is necessary,
the map $\Phi$ is a homeomorphism and the diagram (\ref{cdiag}) is commutative. 
%
\end{proof}

\begin{remark}
Notice that $\tilde\Sigma$ is not necessarily closed under composition or taking inverses,
even for elements close to the identity.
%
%
\end{remark}

\begin{remark}\label{cdiag_k}
The construction carried out in the proof of the above proposition also provides a commutative 
diagram of continuous maps
\begin{equation}
\xymatrix{
\mathcal V^k_L\times \tilde\Sigma^k \ar[r]^{\Psi} \ar[d]_{pr_2} & \mathcal V^k \ar[dl]^{\Phi} \\ 
\tilde\Sigma^k & 
}
\end{equation}
where $\mathcal V^k$ and $\mathcal V^k_L$ are open neighborhoods of the identity in the Sobolev completions 
$\operatorname{Aut}^k(\F)$ and $\D^k_L$ respectively, the maps $\Phi$ and $\Psi$ are defined in the same way 
and $\Psi$ is a homeomorphism.
\end{remark}

\begin{remark}\label{D_F}
Notice that the correspondence $f\mapsto \bar{f} = \Phi(f)$ carried out in the above theorem is well defined 
for any $f\in \D_{\F}$ close enough to the identity since the transverse holomorphy of the diffeomorphisms
plays no role in the construction of $\Phi$. However, for a general foliation, nothing can be said about the properties
of the image $\tilde\Sigma$ of the map so constructed. 
\end{remark}

The following proposition states that, after shrinking it, if necessary, the set $\tilde\Sigma$ is naturally
identified with an open neighborhood of the unity in the simply-connected Lie group $G$ associated 
to the Lie algebra $\G$. More precisely, if we identify $\G$ with the space $\X_{N,b}$ of 
transversely holomorphic basic vector fields on $M$ that are orthogonal to $\F$, then 
we have:

\begin{proposition}\label{P2}
There is a neighborhood $\Sigma$ of the unity in $G$ and a homeomorphism 
$\sigma\colon\Sigma\to\tilde\Sigma$ 
such that the composition
$$\hat\Phi:=\sigma^{-1}\circ\Phi \colon \mathcal V\to \Sigma$$
preserves multiplication and inverses, i.e. $\hat\Phi$ is a morphism of local 
topological groups. 
Moreover, the composition 
$$U\subset \G \equiv\X_{N,b} \to \Sigma,$$ 
defined as $[\xi]\to\hat\Phi\circ \varphi_1^\xi$
for vector fields $\xi$ in $\X_{N,b}$ close to zero,
is the restriction to a neighborhood of zero of the natural exponential map $\G\to G$.
\end{proposition}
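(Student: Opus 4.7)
The key observation is that the elements of $\tilde\Sigma$ are, by construction, determined entirely by their transverse parts, and that these transverse parts form a finite-dimensional local Lie group whose Lie algebra is $\G$. The plan is therefore to recognize this local group of transverse automorphisms as a neighborhood of the unit in $G$, and then to transport everything back to $\tilde\Sigma$ via $\Phi$.

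\textit{Step 1 (parametrization by transverse parts).} Inspection of formula~\eqref{themap} in the proof of Proposition~\ref{P1} shows that the map $\bar{f}\mapsto \{\hat{f}_i\}_{i=1}^m$, sending an element of $\tilde\Sigma$ to its family of local holomorphic transverse parts, is a continuous injection. Its image consists of those compatible families of local biholomorphisms $\hat{f}_i\colon \phi_i(V_i)\to \phi_i(V_i')$, close enough to the identity, satisfying the cocycle compatibility $\hat{f}_j\circ\gamma_{ji}=\gamma_{ji}\circ\hat{f}_i$ wherever this makes sense, with $\gamma_{ji}$ as in \eqref{cocycle}.

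\textit{Step 2 (identification with $G$).} Under composition, such compatible families form a local topological group, whose infinitesimal generators are exactly the $\gamma_{ji}$-equivariant holomorphic vector fields on the transverse polydiscs, i.e., the elements of $\G$. Because $\G$ is finite-dimensional, by integrating these infinitesimal data one obtains, near the identity, a finite-dimensional analytic local Lie group structure with Lie algebra $\G$; by the classical local-to-global theorem for simply-connected Lie groups, this local group is canonically identified with a neighborhood $\Sigma$ of $e$ in $G$. Composing with Step~1 yields the homeomorphism $\sigma\colon \Sigma\to \tilde\Sigma$ (after shrinking $\mathcal V$ and $\Sigma$ if necessary). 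The multiplicativity of $\hat\Phi$ is then immediate: for $f,g\in\mathcal V$ with $f\circ g\in\mathcal V$, the transverse parts satisfy $\widehat{(f\circ g)}_i=\hat{f}_i\circ\hat{g}_i$, and $\Phi$ preserves transverse parts by construction, so $\hat\Phi(f\circ g)=\hat\Phi(f)\cdot\hat\Phi(g)$; inverses are analogous. Finally, for $\xi\in\X_{N,b}$ small, the flow $\varphi_t^\xi$ is a $1$-parameter subgroup of $\AF$ whose transverse parts are the local flows of the holomorphic vector field $[\xi]\in\G$; under the identification of Step~2 this $1$-parameter subgroup of transverse automorphisms is precisely $t\mapsto \exp_G(t[\xi])$, so specializing at $t=1$ gives the stated exponential formula.

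\textit{Main obstacle.} The technical heart of the argument is Step~2, specifically the surjectivity part: showing that \emph{every} compatible family $\{\hat{f}_i\}$ close enough to the identity is the time-$1$ value of a $1$-parameter family generated by some $[\xi]\in\G$. This requires combining the finite-dimensionality of $\G$ (Proposition~\ref{ell} and its corollary) with an implicit function argument on the cocycle condition to integrate infinitesimal transverse automorphisms into genuine ones. Once this is in place, the continuity of $\sigma$ and $\sigma^{-1}$ reduces to the continuous dependence of transverse parts on $f$ in the topology induced from $\D$, which is built into the construction of $\Phi$.
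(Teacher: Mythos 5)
Your Step 1 is essentially the paper's construction: the elements of $\tilde\Sigma$ are encoded by their families of local transverse biholomorphisms compatible with the holonomy cocycle, and multiplicativity of $\hat\Phi$ and the exponential formula at the end do follow formally once the identification with a neighborhood of $e$ in $G$ is in place. The problem is Step 2, which is where the entire content of the proposition lies, and which you state rather than prove. Knowing that the Lie algebra $\G$ of holonomy-invariant holomorphic transverse vector fields is finite dimensional does not by itself show that the local group $\mathcal S$ of compatible families carries a Lie group structure with Lie algebra $\G$, nor (your ``surjectivity'' issue) that every element of $\mathcal S$ near the identity lies on a one-parameter subgroup generated by $\G$. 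The route you sketch --- ``an implicit function argument on the cocycle condition'' --- does not obviously work: $\mathcal S$ is the zero set of the equations $h_\mu\circ f_i = f_j\circ h_\mu$ inside a Banach space of holomorphic families, and the linearization of these equations at the identity has kernel $\G$, but its image has no reason to be closed and complemented, so the implicit function theorem does not apply and the zero set is not a priori a submanifold, let alone a finite-dimensional one. A priori $\mathcal S$ could fail to be locally arcwise connected or could contain elements not reached by flows of elements of $\G$; ruling this out is exactly the difficulty.

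The paper resolves this by a different mechanism, which is absent from your proposal: it first proves (Lemma~\ref{montel}) that $\mathcal S$ is \emph{locally compact}, using Montel's theorem on normal families of holomorphic maps on the transverse polydiscs, and then invokes the Bochner--Montgomery theorem \cite{BM1} on locally compact groups of differentiable transformations (a local form of the solution of Hilbert's fifth problem for transformation groups) to conclude that $\mathcal S$ is a local Lie group, with no manifold structure assumed beforehand. Only afterwards is the Lie algebra identified with $\G$, by matching one-parameter subgroups of $\mathcal S$ with flows of elements of $\X_{N,b}\equiv\G$, which forces $\mathcal S$ to be a neighborhood of the unity in the simply connected group $G$. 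To repair your argument you would either have to supply this local compactness plus Bochner--Montgomery step, or give a genuine transversality/slice argument for the cocycle equations (e.g.\ via the elliptic complex of Proposition~\ref{ell}), neither of which is present in your proposal.
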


Here, $\varphi_t^\xi$ stands for the one-parameter group associated to $\xi$.

\medskip
\noindent{\it Proof.}
We assume, as in the proof of the above proposition, that a finite
family of adapted and cubic local charts $\{(U_i, \varphi_i)\}_{ i=1,\dots, m}$, and 
a family of 4-tuples $\{(U_i, U'_i, V_i, V'_i)\}_{ i=1,\dots, m}$ fulfilling conditions C1, C2 and C3, 
have been fixed. As before we set $\phi= pr_2\circ\varphi$ and we denote by $\Delta_i$ the 
polydisc $\phi_i(U_i)$ of $\mathbb C^q$.
We also set $D_i = \phi_i(V_i)$ and $D'_i = \phi_i(V'_i)$. Let $B_i$ be the Banach space of 
continuous maps $f\colon \bar D_i \to \mathbb C^q$ that are holomorphic on $D_i$ with the norm
$$
| f |_i = \max _{z\in \bar D_i} |f(z)|.
$$
Then the space $\mathcal B = \oplus_i B_i$ with the norm
$$
\| F \| = \| (f_1, \dots, f_m)\| = \sum_i |f_i|
$$
is also a Banach space. Notice that $I = (\id_1, \dots, \id_m)$, where $\id_i$ denotes the identity map of $D_i$,
is an element of $\mathcal B$. 

We claim that there is an open neighborhood $\mathcal I_\epsilon = \{F\in \mathcal B\mid \|F-I\|<\epsilon\}$
of $I$ in $\mathcal B$ such that each $F= (f_1, \dots,f_m)\in\mathcal I_\epsilon$ fulfills: (i) $f_i(\bar D'_i) \subset D_i$
and (ii) $f_i|_{D'_i}$ is a biholomorphism onto its image for $i=1, \dots ,m$. Indeed, we know from 
Cauchy's inequalities that, for a given $\epsilon'>0$ there is $\epsilon>0$ such that $\|F-I\|<\epsilon$ implies
that $\|f_i-\id_i\|_{C^1}<\epsilon'$, where $\|\cdot\|_{C^1}$ stands for the $C^1$-norm on the polydisc $D'_i$. 
%
%
%
%
%
Using the mean value theorem we see that, for $z, z'\in D'_i$,
one has 
\begin{equation}\label{estimate1}
|f_i(z) - f_i(z') | \geq c \, |z-z'|
\end{equation}
for a suitable constant $c>0$.
Inequality (\ref{estimate1})
implies that the restriction of $f_i$ to the polydisc $D'_i$ is injective and it is known 
that a one-to-one holomorphic map 
is necessarily a biholomorphism onto its image. Therefore, there is $\epsilon<0$ such that condition (ii) holds for 
$F\in \mathcal I_\epsilon$ and condition (i) is also fulfilled if $\epsilon$ is small enough. We assume that such an 
$\epsilon$, and therefore the open subset $\mathcal I_\epsilon$ of $\mathcal B$,
have been fixed.

We consider now the holonomy pseudogroup $\mathcal H$ of the transversely holomorphic foliation $\F$.
As a total transversal of $\F$ we choose the union $T = \cup T_i$ of disjoint submanifolds  $T_i$ of $X$ of dimension $2q$
and transverse to $\F$ with the property that $T_i \subset V'_i$ and $\phi_i(T_i) = D'_i$. Notice that $T$ is naturally 
endowed with a complex structure. We identify $T$, through the maps $\phi_i$, with the disjoint union of open polydiscs 
$T\equiv\amalg\, D'_i$. The holonomy of the foliation $\F$ induces local transformations between open subsets of the total transversal $T$.
They are holomorphic and $\mathcal H$ is the pseudogroup generated by these holonomy transformations. Since the manifold 
$X$ is compact, $\mathcal H$ is a compactly generated pseudogroup. This means that $\mathcal H$ is generated by a 
finite number of transformations of the pseudogroup, $h_\mu\colon W_\mu \to T$ for $\mu = 1, \dots, \ell$, 
with the following properties: $W_\mu$ 
is a relatively compact subset of some $D'_i$ and 
$h_\mu$ has an extension to an open neigborhood of $\overline{W}_\mu$ in $D'_i$ 
such that $h_\mu (\overline{W}_\mu) \subset D'_j$ for some
$1 \leq j \leq m$ (cf. \cite{Hae}).

We define $\mathcal S$ as the subset of $\mathcal I_\epsilon$ of those $F=(f_1, \dots, f_m)$
fulfilling
\begin{equation}\label{global}
f_j^{-1}\circ h_\mu\circ f_i = h_\mu
\end{equation}
in the common domain of definition. 
Clearly, $\mathcal S$ is closed in $\mathcal I_\epsilon$. We notice that each element 
$F=(f_1, \dots, f_m)\in \mathcal S$
induces a well-defined diffeomorphism $\sigma(F)$ belonging to $\tilde\Sigma$.
This correspondence $\sigma\colon \mathcal S \to \tilde\Sigma$ 
can be constructed as follows. Formula (\ref{themap}), where we replace $\hat f_i$ 
by the corresponding component $f_i$ of $F$, determines a (local) foliation preserving 
diffeomorphism $\bar f_i\colon V'_i\to V_i$ whose transverse part is just $f_i$. The identities
(\ref{global}) guarantee that formula \eqref{themap} applied to the family $\{f_i\}$ provides a well defined 
diffeomorphism 
$f= \sigma(F)$ of $M$ belonging to $\tilde\Sigma$. Clearly $\sigma$ maps
$\mathcal S$ homeomorphically  onto its image 
$\tilde\Sigma' = \sigma(\mathcal S)$.
The fact that $f= \sigma(F)$ is a diffeomorphism globally defined over $M$
implies in 
particular that each one of the maps $f_i$ extends holomorphically to the polydisc 
$\Delta_i = \phi_i(U_i)$ and maps $\phi_i(U'_i)$ biholomorphically 
onto an open subset of $\Delta_i$. From that fact one deduces easily that 
$\mathcal S$ is a local topological group (for the general properties of local 
topological groups we refer to \cite{Pont}).
Moreover, if we set $\mathcal V' = \Phi^{-1}(\tilde\Sigma')$
then the composition 
$$\hat\Phi:=\sigma^{-1}\circ\Phi \colon \mathcal V'\to \mathcal S$$
can be seen as the map which associates 
its transverse part $\hat f$ to each diffeomorphism
$f$ in $\AF$ which is close enough to the identity. It follows in particular that the map $\hat\Phi$
preserves multiplication and inverses. It is therefore a morphism of local groups.

The end of the proof will be based on the following fact.

\begin{lemma}\label{montel}
The local group $\mathcal S$ is locally compact. 
\end{lemma}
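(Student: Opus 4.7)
The plan is to exhibit a compact neighborhood of the identity inside $\mathcal S$, using Montel's theorem applied to holomorphic extensions of the components of elements of $\mathcal S$. The crucial point, already recorded just above the lemma, is that for every $F=(f_1,\dots,f_m)\in\mathcal S$ the component $f_i$, initially defined only on $\bar D_i$, extends holomorphically to the larger polydisc $\Delta_i=\phi_i(U_i)$, and the restriction of that extension to $\phi_i(U'_i)$ takes values inside the bounded polydisc $\Delta_i$. This uniform a priori bound on an open set strictly larger than $\bar D_i$ is precisely what enables a normal-families argument.

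Concretely, I would fix some $0<\eta<\epsilon$ and consider
\[
K \;=\; \mathcal S \cap \{F\in\mathcal B:\|F-I\|\le\eta\},
\]
which is a closed neighborhood of the identity in $\mathcal S$ (closed because $\mathcal S$ is closed in $\mathcal I_\epsilon$). Given any sequence $F^{(n)}=(f_1^{(n)},\dots,f_m^{(n)})\in K$, each $F^{(n)}$ determines a global diffeomorphism $\sigma(F^{(n)})\in\tilde\Sigma$, whose transverse parts on the charts $U_i$ furnish holomorphic extensions $\tilde f_i^{(n)}\colon \phi_i(U'_i)\to \Delta_i$ of $f_i^{(n)}$. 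Since $\Delta_i$ is a bounded subset of $\mathbb C^q$, the families $\{\tilde f_i^{(n)}\}_n$ are uniformly bounded on the open set $\phi_i(U'_i)$.

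By Montel's theorem applied successively to each index $i=1,\dots,m$, I can extract a diagonal subsequence $F^{(n_k)}$ such that, for every $i$, $\tilde f_i^{(n_k)}$ converges uniformly on compact subsets of $\phi_i(U'_i)$ to some holomorphic map $f_i^\star\colon \phi_i(U'_i)\to\mathbb C^q$. Since $\bar D_i$ is a compact subset of $\phi_i(U'_i)$, this yields convergence of $f_i^{(n_k)}$ to $f_i^\star|_{\bar D_i}$ in the Banach space $B_i$, and hence $F^{(n_k)}\to F^\star=(f_1^\star|_{\bar D_1},\dots,f_m^\star|_{\bar D_m})$ in $\mathcal B$. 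The bound $\|F^\star-I\|\le\eta$ is preserved in the limit. To see that $F^\star$ still satisfies the holonomy identities, I rewrite \eqref{global} in the form $f_j^{(n)}\circ h_\mu=h_\mu\circ f_i^{(n)}$, valid on the relatively compact set $W_\mu\subset\phi_i(U'_i)$; uniform convergence of the $\tilde f_i^{(n_k)}$ on compacts then passes this identity to the limit, so $F^\star\in\mathcal S$ and $F^\star\in K$.

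Thus $K$ is sequentially compact, and since $\mathcal B$ is a Banach space (hence metrizable) this is equivalent to compactness, proving that $\mathcal S$ is locally compact. The one place where something non-formal happens is the uniform boundedness step: it relies on the fact that elements of $\mathcal S$ are not arbitrary holomorphic tuples but transverse parts of global diffeomorphisms of the compact manifold $M$, forcing their components to admit the required holomorphic extensions with values in the bounded polydisc $\Delta_i$. Everything else (the diagonal extraction, the passage of the holonomy relations to the limit, and the identification of compactness with sequential compactness) is standard.
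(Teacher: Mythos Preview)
Your proof is correct and follows essentially the same route as the paper: fix a closed $\epsilon'$-ball in $\mathcal S$, use the fact that elements of $\mathcal S$ extend holomorphically to the larger domain $\phi_i(U'_i)$ with values in the bounded polydisc $\Delta_i$, apply Montel's theorem to extract a uniformly convergent subsequence, and check that the limit remains in $\mathcal S$. Your verification that the holonomy relations persist in the limit by rewriting \eqref{global} as $f_j\circ h_\mu=h_\mu\circ f_i$ is a clean touch; the paper instead extracts a subsequence along which the inverses also converge, but the two arguments are equivalent since the limit lies in $\mathcal I_\epsilon$ and is therefore automatically locally invertible.
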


\begin{proof}
Let us fix $0<\epsilon'<\epsilon$.
It is sufficient to prove 
that the neighborhood $\mathcal S_{\epsilon'}$ of 
$I$ defined by $\|F-I\|\leq\epsilon'$ is compact. 
Let  $\{F_k =(f_{k,1}, \dots, f_{k,m})\}$ 
be a sequence of elements of $\mathcal S_{\epsilon'}$. 
As noticed before, we can think of each $f_{k,i}$
as a holomorphic transformation defined on  $\Delta_i$ that sends 
$\phi_i(U'_i)$ into $\Delta_i$. By 
taking $\epsilon'$ small enough, we can also assume that $f_{k,i}(\phi_i(U'_i))$ contains $D_i$. 
It follows from Montel's theorem that there is a subsequence 
$\{F_{k_j}\}$ of $\{F_k\}$ such that each $f_{{k_j},i}$ converges uniformly 
to a holomorphic 
map $\tilde f_i$. Since $f_{k,i}$ are invertible, we can suppose that the sequences 
$\{f_{k_j}^{-1}\}$, that are defined on $D_i$, also converge to a limit which
necessarily is $\tilde f^{-1}_i$. This proves that $\{F_{k_j}\}$ has a limit that belongs to 
$\mathcal S_{\epsilon'}$.  
\end{proof}

\begin{proof}[End of proof of Proposition \ref{P2}]
Now we are in position to apply Theorem A in \cite{BM1} to the local group 
$\mathcal S$.
Although the result by Bochner and Montgomery
is stated in the context of topological groups, the arguments used there are purely local and therefore 
they still remain valid for locally compact 
local groups of transformations. It implies in particular that $\mathcal S$ is a local Lie 
group, i.e. $\mathcal S$ is isomorphic to a neighborhood of the identity of a certain Lie group. 

Finally, we claim that this Lie group is just the 1-connected Lie group $G$ associated 
to the Lie algebra $\G$. Indeed, each element $\xi$ of $\X_{N,b}\equiv \G$ induces
a one parameter group $\varphi_t^\xi$ that is sent by $\hat\Phi$ (and for small $t$)
into a local one parameter subgroup of $\mathcal S$ which does not reduce to the identity
unless $\xi = 0$. Conversely, each local one parameter subgroup of $\mathcal S$
is obtained in this way. Hence $\mathcal S$ is naturally identified to a 
neighborhood $\Sigma$ of the unity in $G$. The last assertion of the proposition is clear 
from the above discussion.
\end{proof}

\begin{remark}\label{sigma_k}
The map $\sigma$ constructed in the above proposition also identifies $\Sigma$ 
with the set $\tilde\Sigma^k = \Phi(\mathcal V^k)$ considered in Remark~\ref{cdiag_k}
\end{remark}

\begin{remark}\label{diff}
The following criterium is useful to assure that certain maps between Fr\'{e}chet 
manifolds are smooth (cf. \cite{Ham}).
Let $V$ and $V'$ be fibre bundles over a compact manifold $X$, eventually with boundary, 
and let $\Gamma(V)$
and $\Gamma(V')$ denote the Fr\'{e}chet manifolds of the 
smooth sections of $V$ and $V'$ respectively.
Assume that there is an open subset $U\subset V$ meeting all the fibres of $V$ and a 
fibrewise smooth map $F\colon U\to V'$
projecting onto the identity of $X$. Let $\tilde U$ be the subset of $\Gamma(V)$ 
of those sections $s$ having image in $U$.
Then the map $\chi\colon\tilde U\subset \Gamma(V) \to \Gamma(V')$
given by $s\mapsto \chi(s) = F\circ s$ is a smooth map of Fr\'{e}chet manifolds.
Moreover, assume that $\mathcal E$ and $\mathcal E'$ are Fr\'{e}chet submanifolds 
of $\Gamma(V)$ and $\Gamma(V')$ respectively with $\mathcal E\subset \tilde U$
and such that $\chi(\mathcal E) \subset \mathcal E'$, then the restricted map
$\chi\colon \mathcal E \to \mathcal E'$ is also smooth.
\end{remark}

Combining the above two propositions we obtain the commutative diagram
\begin{equation}\label{cdiag2}
\xymatrix{
\mathcal V_L\times \Sigma \ar[r]^{\hat\Psi} \ar[d]_{pr_2} & \mathcal V \ar[dl]^{\hat\Phi} \\ 
\Sigma & 
} 
\end{equation}
As a corollary we deduce the following

\begin{proposition}\label{P3}
With the induced topology, $\AF$ is a closed strong ILH-Lie subgroup of $\D$
with Lie algebra $\aF$.
\end{proposition}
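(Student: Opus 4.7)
The plan is to derive the strong ILH-Lie subgroup structure of $\AF$ (with the topology induced from $\D$) from the local product chart of diagram~\eqref{cdiag2}, upgrade it to Fr\'{e}chet smoothness and to Sobolev completions, and then identify it with $\operatorname{Aut}'(\F)$ from Proposition~\ref{P0} via Omori's Proposition~\ref{two-top}. A preliminary step is closedness of $\AF$ in $\D$: the subgroup $\D_{\F}$ is closed (foliation-preservation is $C^\infty$-closed), and within $\D_{\F}$ the transverse holomorphy condition $\bar\partial f_2 = 0$ on the local transverse parts is also closed.

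For the local model, Propositions~\ref{P1} and~\ref{P2} give a homeomorphism $\hat\Psi \colon \mathcal V_L \times \Sigma \to \mathcal V$ near the identity, where $\mathcal V_L$ is open in the strong ILH-Lie group $\D_{L}$ (Proposition~\ref{d-tangent}) and $\Sigma$ is open in the finite-dimensional Lie group $G$ associated to $\G$. Using Remark~\ref{diff}, the explicit formula~\eqref{themap} shows that $\Phi$ is smooth in the Fr\'{e}chet sense: $\bar f$ is reconstructed from the finitely many holomorphic transverse parts $\hat f_i$ by a fibrewise smooth operation built out of the geodesic exponential. Hence $\hat\Psi$ is a Fr\'{e}chet diffeomorphism. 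By Remarks~\ref{cdiag_k} and~\ref{sigma_k}, the analogous factorization $\mathcal V^k \cong \mathcal V_L^k \times \Sigma$ holds at every Sobolev level. Since $\G$ is finite-dimensional, the splitting $\aF = \X_{L} \oplus \G$ extends automatically to $\aF^k = \X_L^k \oplus \G$ and matches the ambient splitting $\X = \aF \oplus \mathfrak f$ produced by the elliptic operator in the proof of Proposition~\ref{P0}. Together these ingredients verify conditions (1)--(6) of Definition~\ref{ILH} and the splitting axiom of a strong ILH-Lie subgroup, at least at the identity; right translation then propagates the chart to every point.

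To conclude, I would apply Proposition~\ref{two-top} to the strong ILH-Lie subgroup $\operatorname{Aut}'(\F)$ of Proposition~\ref{P0}. The product chart shows that a neighborhood of the identity in $\AF$ is contained in $\operatorname{Aut}'(\F)$ (both factors $\mathcal V_L$ and $\Sigma$ lie in $\operatorname{Aut}'(\F)$), so $\operatorname{Aut}'(\F)$ is open, hence closed, in $\AF$, and therefore closed in $\D$. Since $\D$ is second countable and the local model $\aF$ is a Fr\'{e}chet space (in particular second countable), the LPSAC topology of $\operatorname{Aut}'(\F)$ is second countable as well. Proposition~\ref{two-top} then equates it with the induced topology, identifying $\operatorname{Aut}'(\F)$ with the LPSAC component of the identity in $\AF$, and making $\AF$ a closed strong ILH-Lie subgroup of $\D$ with Lie algebra $\aF$.

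The main obstacle is the promotion of $\Phi$ from a merely continuous map (the output of Propositions~\ref{P1} and~\ref{P2}) to a smooth map of Fr\'{e}chet manifolds and, more delicately, a $C^m$ map between Sobolev completions of the right orders, in the precise form required by Definition~\ref{ILH}. Remark~\ref{diff} is the tailor-made device, but applying it requires recognizing $f \mapsto \bar f$ as a fibrewise smooth operation on the finite-dimensional transverse data $(\hat f_i)_i \in \Sigma$ — an identification which the explicit formula~\eqref{themap} makes possible, but whose regularity statements for each Sobolev level must be checked individually. A secondary technicality is verifying compatibility of the product chart $\hat\Psi$ with a canonical exponential chart of $\D$ so as to match item~(6) of Definition~\ref{ILH}; this should follow from the fact that the $\X_L$-factor and the $\G$-factor both integrate to smooth one-parameter families in the respective Sobolev completions, reproducing the decomposition $f = f_L \circ \bar f$.
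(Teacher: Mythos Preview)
Your approach is correct and rests on the same core ingredient as the paper's proof: the product chart $\mathcal V \cong \mathcal V_L \times \Sigma$ (and its Sobolev avatars from Remarks~\ref{cdiag_k} and~\ref{sigma_k}) shows that $\operatorname{Aut}'(\F)$ from Proposition~\ref{P0} is the identity component of $\AF$. The paper, however, takes a shorter and more direct route. It does \emph{not} invoke Proposition~\ref{two-top}; instead it argues ``as in the proof of Proposition~\ref{d-tangent}'': since $\operatorname{Aut}'^k(\F)$ is an integral submanifold for the Frobenius distribution, any piecewise $C^1$ curve in $\operatorname{Aut}^k(\F)$ starting at $e$ must lie in $\operatorname{Aut}'^k(\F)$, and the product chart $\mathcal V^k \cong \mathcal V_L^k \times \Sigma$ makes $\operatorname{Aut}^k(\F)$ a Hilbert manifold near $e$, hence LPSAC. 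This forces $\operatorname{Aut}'^k(\F) = \operatorname{Aut}^k_0(\F)$ at every Sobolev level, and the conclusion follows. Your detour through Proposition~\ref{two-top} is legitimate but requires the extra verification of second countability of the LPSAC topology, which you handle a bit loosely (the clean reason is that $\operatorname{Aut}'(\F)$ is a \emph{connected} Fr\'{e}chet Lie group modeled on the separable Fr\'{e}chet space $\aF$).

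More importantly, you are doing work that is not needed for Proposition~\ref{P3}. The Fr\'{e}chet smoothness of $\Phi$ via Remark~\ref{diff}, the compatibility with the exponential chart of $\D$ in the sense of condition~(6) of Definition~\ref{ILH}, and the $C^m$ regularity between Sobolev completions are all irrelevant here: the paper uses only the \emph{continuous} homeomorphism $\hat\Psi$ and defers all smoothness statements to the ``End of proof of Theorem~\ref{LieGroup_1}'', where they are needed for the Lie foliation $\F_{\mathcal D}$. So the ``main obstacle'' you flag is not an obstacle at all for this proposition---only continuity of $\Phi$ and $\Psi$ at each Sobolev level (Remark~\ref{cdiag_k}) is required, and that was already established in Proposition~\ref{P1}.
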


\begin{proof}
Notice first that $\AF$ is a closed subgroup of $\D$.  
Using the above commutative diagram as well as Remarks~\ref{cdiag_k}
and \ref{sigma_k}, and arguing as in the proof of Proposition~\ref{d-tangent},
we see that $\operatorname{Aut}'^k(\F)$ coincides with
$\operatorname{Aut}_0^k(\F)$ (the connected component of 
$\operatorname{Aut}^k(\F)$
containing the identity). This implies that $\operatorname{Aut}'(\F)$
is the connected component of the identity of $\AF$ concluding the proof.
\end{proof}


\begin{proof}[End of proof of Theorem~\ref{LieGroup_1}]
We first prove that the map $\hat\Phi$ in diagram~\eqref{cdiag2} is smooth.
Let $\{(U_i, \varphi_i)\}_{ i=1,\dots, m}$ be a finite
family of adapted and cubic local charts of $M$ and let $\{(U_i, U'_i)\}$ be a family of
regular pairs with the properties that $\{U'_i\}$
is an open cover of $M$ and $\phi_i(U_i) = D_i$ and $\phi_i(U'_i) = D'_i$ are open polydiscs. 
We choose a family $\{T_i\}$ of disjoint transversals
$T_i\subset U_i$ of $\F$ such that $\phi_i(T_i) = D_i$. We set $T'_i = T_i \cap \bar U'_i$ and we 
identify $T= \amalg\, T'_i$, through the maps $\phi_i$,
with the disjoint union of closed polydiscs $T= \amalg\, \bar D'_i$. The manifolds $V=\amalg\, (T'_i\times U_i)$
and $W= \amalg\, (T'_i\times D_i)$ are fibre bundles over $T$ taking as projection the natural projections 
onto the first factor. The map $F\colon V\to W$ given by $F(z_i, w_i) = (z_i, \phi_i(w_i))$ is fibrewise and
smooth, and it follows from the criterium stated in Remark~\ref{diff} that the map 
$\chi\colon\Gamma(V) \to \Gamma(W)$, given by $\chi(s) = F\circ s$,
is smooth. Notice that diffeomorphisms of M close to the identity
can be thought of sections of the fibre bundle $pr_1\colon M\times M \to M$ with image close to the diagonal.
Moreover, for a small enough neighborhood $\mathcal W$ 
of the identity in $\D$, the map 
$$
\begin{array}{ccc}
\tau\colon\mathcal W \ & \longrightarrow & \Gamma(V)\\[1mm]
f & \mapsto & \{z_i \mapsto (z_i, f(z_i))\}
\end{array}
$$
is well defined and smooth. If $f\in \mathcal W$ belongs to $\AF$ the family $\{z_i, \phi_i (f(z_i))\} = \chi(\tau(f))$
is an element of the local group $\mathcal S\equiv \Sigma$ constructed in the proof of Proposition~\ref{P2}. 
Therefore 
the map $\hat\Phi\colon \mathcal V\to \Sigma$ can be written as the 
restriction to $\mathcal V\subset \AF$ of the composition $\chi\circ\tau$, which proves that it is smooth.

Let us consider now the map $\sigma\colon \Sigma\equiv \mathcal S\to \tilde\Sigma\subset\AF$. A reasoning similar to 
the previous one, using now the local expression of the correspondence $s=\{f_i\}\in \mathcal S\mapsto \bar{f} \in \tilde \Sigma$
given by the equality~\eqref{themap}, shows that 
$\sigma$ is smooth and therefore that $\hat \Psi \colon \mathcal V_L\times \Sigma \to \mathcal V$, which is given by 
the multiplication $\hat\Psi(f,s) = f\circ\sigma(s)$, is also smooth.
Its inverse $\hat\Psi^{-1}$ is just the correspondence $f\mapsto (f_L, f_N)$,
where $f_N= \hat\Phi(f)$ and $f_L = f\circ f_N^{-1}$,
which proves that $\hat \Psi$ is a diffeomorphism.


Therefore, the map 
$\hat\Psi\colon\mathcal V_L\times \Sigma \to \mathcal V$ can be regarded as a local 
chart of $\AF$ and the map $\hat \Phi\colon\mathcal V \to \Sigma$ as a submersion defining
the foliation $\F_{\mathcal D}$ in a neighborhood of the identity.

Let $f\in \operatorname{Aut}_0(\F)$ be given. As each connected topological group, 
$\operatorname{Aut}_0(\F)$ is generated by a given 
neighborhood of the identity, in particular by $\mathcal V\cong \mathcal V_L\times \tilde\Sigma$.
It follows, using that $\D_{L}$ is a normal subgroup of $\operatorname{Aut}_0(\F)$, that
we can write
$$
f= f_L \circ f_N = f_L \circ\exp(v_1) \circ \dots \circ \exp(v_k)
$$
where $f_L\in \D_{L}$ and $v_i\in \X_{N,b}$ are vector fields with the property that, if we denote 
$\hat v_i = \vartheta(v_i) \in \G$, where $\vartheta\colon \X_{N,b}\to \G$ is the natural identification,
then $\exp(\hat v_i)\in \Sigma$. We denote by $\hat f_N$ the element of $G$ given by
$$
\hat f_N = \exp(\hat v_1) \circ \dots \circ \exp(\hat v_k)
$$ 
Now, proceeding as before we can prove that there exist neighborhoods 
$\mathcal V_f\subset \AF$, 
$\mathcal V_{L,f} \subset \D_{L}$ and $\Sigma_f \subset G$,
of $f$, $f_L$ and $\hat f_N$ respectively, and a commutative diagram
$$
\xymatrix{
\mathcal V_{L,f}\times \Sigma_f \ar[r]^{\hat\Psi_f} \ar[d]_{pr_2} & \mathcal V_f \ar[dl]^{\hat\Phi_f} \\ 
\Sigma_f & 
}
$$
where $\hat\Psi_f$ is a diffeomorphism providing a local chart of $\AF$
and $\hat\Phi_f$ is a smooth submersion defining $\mathcal F_{\mathcal D}$ in $\mathcal V_f$.
Finally, it also follows from the above discussion that, if $\mathcal V_{f_i}$ and $\mathcal V_{f_j}$
have non-empty intersection, then there are locally constant $G$-valued functions
$\gamma_{ij}$
fulfilling
$$
\hat \Phi_{f_i} = L_{\gamma_{ij}} \circ \hat \Phi_{f_j}.
$$ 
This ends the proof of the theorem
\end{proof}

\section{The automorphism group of a Riemannian foliation}
\label{RF}



We suppose in this section that the foliation $\F$ on $M$ is Riemannian. This means that the 
local submersions $\phi_i$ defining the foliation take values in a Riemannian manifold
$T$ and that the transformations $\gamma_{ij}$ fulfilling the cocycle condition \eqref{cocycle}
are local isometries of $T$. In an equivalent way, the foliation $\F$ is Riemannian if there is a 
Riemannian metric $g$ on $M$ which is bundle-like with respect to $\F$, that is a metric that 
in local adapted coordinates
$(x,y)$  is written
$$
g = \sum g_{ij} (x,y) \omega^i \omega^j 
+ \sum g_{a b} (y) dy^a dy^b,
$$ 
where $\{\omega^i, dy^a\}$ is a local basis of $1$-forms such that $\omega^i$ vanish on the 
bundle of vectors 
orthogonal to $\F$.

We suppose that such a 
bundle-like metric $g$ has been fixed. Let $p$ and $q$ stand for the dimension and the codimension
of $\F$ respectively.

In this section $\AF$ will denote the automorphism group of the Riemannian foliation $\F$,
that is the group of elements of $\DF$ which preserve the transverse Riemannian metric, and by 
$\aF$ the Lie algebra of vector fields whose flows are one-parameter 
subgroups of $\AF$. Also in this case, $\D_{L}$ is a normal subgroup of $\AF$ and $\X_{L}$ 
is an ideal of $\aF$. 
We denote by $\G = \aF/\X_{L}$ the quotient Lie algebra. The elements of $\G$ generate
local isometries on $T$, therefore they
are called basic Killing vector fields.

\begin{proposition}
The Lie algebra $\G$ of basic Killing vector fields has finite dimension.
\end{proposition}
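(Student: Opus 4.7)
The plan is to realize $\G$ as the kernel of an overdetermined elliptic operator on the compact manifold $M$, in close analogy with the Duchamp--Kalka elliptic complex of Proposition~\ref{ell}. Each element of $\G$ is represented by a basic section of the normal bundle $\nu\F$ whose flow preserves the transverse metric $g_T$ induced by the bundle-like metric $g$. Hence it lies simultaneously in the kernel of two natural first order operators on $\Gamma(\nu\F)$: the Bott partial connection $\nabla^b\colon \Gamma(\nu\F) \to \Gamma(\nu\F \otimes T^*\F)$, whose kernel is precisely the space of basic sections, and the transverse Killing operator $K$ sending a basic section $X$ to the basic symmetric 2-tensor $\mathcal L_X g_T$.

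First I would assemble these into a single first order differential operator
\[
D\colon \Gamma(\nu\F) \longrightarrow \Gamma\bigl(\nu\F \otimes T^*\F\bigr) \oplus \Gamma\bigl(S^2 \nu^*\F\bigr),
\]
where the second component is defined on all of $\Gamma(\nu\F)$ (not just on basic sections) by symmetrization of the transverse Lie derivative, arranged so as to agree with $K$ on $\ker \nabla^b$. By construction $\G$ is identified with $\ker D$, and it only remains to prove that this kernel is finite-dimensional.

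The main step is to verify that $D$ has injective principal symbol, so that $D^*D$ is elliptic and therefore has finite-dimensional kernel on the compact manifold $M$ by standard elliptic theory. At a nonzero cotangent vector $\xi = \xi_L + \xi_N \in T^*\F \oplus \nu^*\F$, the symbol of $\nabla^b$ is tensoring with $\xi_L$, which is injective as soon as $\xi_L \neq 0$, while the symbol of the second component at $\xi$ is the classical Killing symbol $v \mapsto \xi_N \odot v^\flat$, injective whenever $\xi_N \neq 0$. Since a nonzero $\xi$ has $\xi_L \neq 0$ or $\xi_N \neq 0$, the combined symbol is injective. I expect the principal subtlety to be in ensuring that the off-basic extension of the second component is an honest differential operator whose symbol contribution is really the Killing symbol; this should be handled exactly as in the extension of $d^0_\F$ from basic to arbitrary sections carried out in the proof of Proposition~\ref{P0}.

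As a back-up argument, one can pass to the transverse orthonormal frame bundle $\hat M \to M$, where the lifted foliation $\hat\F$ is transversely parallelizable of codimension $q + q(q-1)/2$. By Molino's structure theorem the Lie algebra of basic vector fields of $\hat\F$ is finite-dimensional of dimension at most $\operatorname{codim} \hat\F$, and $\G$ embeds into its $O(q)$-invariant part, yielding the explicit bound $\dim \G \leq q(q+1)/2$.
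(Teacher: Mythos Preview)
Your back-up argument via the transverse orthonormal frame bundle is precisely the route the paper takes: it lifts $\F$ to a transversely parallelizable foliation $\F_P$ on the $O(q)$-principal bundle of transverse orthonormal frames and then invokes the classical fact that the automorphism algebra of a parallelism is finite-dimensional, presenting this as the Myers--Steenrod mechanism adapted to foliations (with a reference to Molino).

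Your main approach, by contrast, is genuinely different and also correct. Assembling the Bott partial connection $\nabla^b$ and a transverse Killing operator into a single first-order operator $D$ with injective principal symbol is the natural Riemannian analogue of the Duchamp--Kalka complex of Proposition~\ref{ell}. The symbol computation you outline goes through: the transverse Levi-Civita connection on $\nu\F$ (which extends $\nabla^b$) furnishes a global first-order extension of the Killing operator to all of $\Gamma(\nu\F)$, and since its arguments range over $N\F$ its principal symbol depends only on $\xi_N$ and equals the classical $v \mapsto \xi_N \odot v^\flat$. This elliptic route has the virtue of placing the Riemannian and transversely holomorphic cases in a common analytic framework, parallel to Proposition~\ref{P0}; the frame-bundle route is shorter, avoids any PDE machinery, and immediately yields the sharp bound $\dim \G \leq q(q+1)/2$, which the elliptic method does not give without extra work.
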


The above proposition can be proved in the same way as the classical theorem of Myers and Steenrod
(cf.  \cite{Kob, Mol}). Namely, let $\pi\colon P\to M$ be the $O(q)$-principal fibre bundle 
of transverse orthonormal
frames of the Riemannian foliation $\F$. The total space $P$ is endowed with a foliation $\F_P$ 
of dimension $p$ that is projected by $\pi$ to $\F$. The foliation $\F_P$ is transversely
parallelizable and the elements of $\AF$ are in one-to-one correspondence with the 
automorphisms of the principal fibre bundle which preserve the transverse parallelism and the 
Riemannian connection. The finiteness of $\G$ then follows from the general fact according to which the 
automorphism group of a parallelism is finite dimensional.

In particular we can consider the simply connected Lie group associated to the Lie algebra $\G$
that we denote $G$. 

As we already mentioned Leslie proved in~\cite{Les2} that, in the case of a Riemannian foliation, 
the group  $\DF$ is a Fr\'{e}chet Lie group. 
Moreover, using the connection provided by the Riemannian metric, Omori's proof that $\D$ is a strong 
ILH-Lie group can be adapted to show the following

\begin{proposition}
Let $\F$ be a Riemannian foliation on a compact manifold $M$. Then $\DF$ is an
ILH-Lie group with Lie algebra $\mathfrak X_{\F}$.
\end{proposition}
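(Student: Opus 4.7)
The plan is to adapt Omori's proof of the strong ILH-Lie structure of $\D$, using as chart at the identity the exponential map of a connection on $TM$ built out of the bundle-like metric $g$. Recall that Omori's argument constructs the chart on $\D$ via $\xi \mapsto (p\mapsto \exp_p(\xi(p)))$ for the Levi-Civita connection of a Riemannian metric; to work with $\DF$ I need an adapted connection whose exponential additionally sends $\X_\F$ into $\DF$.

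First I will construct an adapted connection $\hat\nabla$ on $TM$ that preserves the decomposition $TM = T\F \oplus N\F$, coincides on $N\F$ with the pull-back of the transverse Levi-Civita connection associated to the bundle-like metric, and is an arbitrary fixed leafwise connection on $T\F$. The crucial feature of $\hat\nabla$ is that, in adapted coordinates $(x,y)$, the only possibly non-vanishing Christoffel symbols with transverse target index are the $\Gamma^a_{bc}(y)$, which depend only on the transverse coordinates. This follows from the bundle-like condition: the transverse part of $g$ descends locally to a Riemannian metric on the leaf space, whose Levi-Civita connection pulled back has no dependence on the leaf variables.

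Next I define the chart $E(\xi)(p) = \exp_p^{\hat\nabla}(\xi(p))$ and show that it restricts to a local homeomorphism between a neighborhood of $0$ in $\X_\F^k$ (the closure of $\X_\F$ in $\X^k$) and a neighborhood of the identity in $\DF^k$. For the forward inclusion, any foliated vector field has the local expression $\xi = a^i(x,y)\partial_{x^i} + b^a(y)\partial_{y^a}$, and the transverse projection $y(t)$ of the geodesic from $p=(x_0,y_0)$ with velocity $\xi(p)$ satisfies the decoupled ODE $\ddot y^a + \Gamma^a_{bc}(y)\dot y^b \dot y^c = 0$ with initial data $(y_0, b^a(y_0))$; hence $y(1)$ depends only on $y_0$ and $E(\xi)$ sends leaves to leaves. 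For the backward inclusion, a foliation-preserving $f$ close to $e$ lifts to a unique $\xi \in \X^k$ under Omori's full chart, and since $f$ maps leaves to leaves the transverse endpoints of the geodesic from $p$ to $f(p)$, and hence its initial transverse velocity, depend only on the transverse coordinates of $p$, so $\xi \in \X_\F^k$.

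Finally, transporting this chart by right translations, which are smooth on $\D^k$ and preserve $\DF^k$, yields an atlas on $\DF^k$ for every $k\geq k_0$, and axioms 1)--6) of Definition~\ref{ILH} are inherited from those of $\D$ because the chart at $e$ is the restriction of Omori's chart to the closed subspace $\X_\F$; the Lie algebra identification is immediate. The main obstacle will be the construction of $\hat\nabla$ and the verification of its transverse projectability: this relies essentially on the bundle-like property of $g$ and is precisely the feature that fails for generic smooth foliations, as reflected in the negative result of Theorem~\ref{contra}.
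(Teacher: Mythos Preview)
Your proposal is correct and follows essentially the same approach as the paper, which does not give a detailed proof but merely states that ``using the connection provided by the Riemannian metric, Omori's proof that $\D$ is a strong ILH-Lie group can be adapted'' to yield the result. Your construction of an adapted connection $\hat\nabla$ whose geodesics project to transverse geodesics, and the use of its exponential to restrict Omori's chart to $\X_\F$, is precisely the adaptation the paper has in mind; just be careful that in adapted coordinates $N\F$ is generally not spanned by the $\partial_{y^a}$, so the decoupling of the transverse geodesic equation is cleanest phrased via projectability of $\hat\nabla$ to the local leaf space rather than via a naive Christoffel-symbol count.
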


Our main result in this section is the following theorem.


\begin{theorem}\label{LieGroup_R}
Let $\F$ be a Riemannian foliation on a compact manifold $M$.
The automorphism group $\AF$ of $\F$ is closed in $\D_{\F}$ and,
with the induced topology, it is a strong ILH-Lie group with Lie algebra $\aF$. 
Moreover, the left cosets of the subgroup $\D_{L}$  
define a Lie foliation $\F_{\mathcal D}$ on $\AF$, which is transversely modeled on 
the simply-connected complex Lie group $G$
associated to the Lie algebra $\G$.
\end{theorem}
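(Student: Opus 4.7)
The plan is to mirror, step by step, the four-part proof of Theorem~\ref{LieGroup_1} from Section~\ref{TH}, replacing the transversely holomorphic ingredients by their Riemannian counterparts. Specifically, the Duchamp--Kalka elliptic complex is replaced by an elliptic operator encoding the transverse Killing equation; the local group of transverse biholomorphisms that appears in Proposition~\ref{P2} is replaced by a local group of transverse isometries of the model Riemannian manifold $T$; and by Remark~\ref{D_F}, the orthogonal-slice decomposition $\mathcal V\cong\mathcal V_L\times\tilde\Sigma$ of Proposition~\ref{P1} carries over verbatim.

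First I would produce a strong ILH-Lie subgroup $\operatorname{Aut}'(\F)\subset\AF$ with Lie algebra $\aF$ by Omori's Frobenius theorem (Theorem~\ref{Frobenius}). The relevant operator $A\colon\Gamma(TM)\to\Gamma(E)$ combines the basic condition on the normal part $\pi(\xi)\in\Gamma(\nu\F)$ with the transverse Killing equation $\mathcal L_{\pi(\xi)}g_T=0$; its kernel is precisely $\aF$, and it forms a Lie subalgebra of $\X$. Ellipticity of $AA^*$ (with $B=0$) is checked on symbols: the basic component is elliptic in leaf-tangent codirections while the transverse Killing operator is elliptic in transverse codirections (the classical fact underlying the theorem of Myers--Steenrod), so the two together cover all nonzero covectors. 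A conceptually cleaner route, which I would likely adopt, is to lift the problem to the $O(q)$-bundle $\pi_P\colon P\to M$ of transverse orthonormal frames, on which $\F$ lifts to a transversely parallelizable foliation $\F_P$; this is also the standard path to the finiteness of $\G$. Just as in Proposition~\ref{P0}, the integral-submanifold property of $\operatorname{Aut}'(\F)$ with respect to the right-translated distribution forces $\operatorname{Aut}'(\F)\subset\AF$.

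Next I would reproduce Propositions~\ref{P1} and~\ref{P2}. The map $\Phi\colon\mathcal V\to\D$ and the decomposition $\mathcal V\cong\mathcal V_L\times\tilde\Sigma$ are defined by the same orthogonal slicing; the only point to check is that $\bar f=\Phi(f)\in\AF$ when $f$ does, which is immediate from formula~\eqref{themap} since the local transverse part of $\bar f$ coincides with that of $f$ and is hence a local isometry of $T$. To identify $\tilde\Sigma$ with a neighborhood $\Sigma$ of $e$ in $G$, I would define $\mathcal S$ as the set of tuples $F=(f_1,\dots,f_m)$ of local isometries between open subsets of the transversals $T_i$ that satisfy the holonomy compatibility relation~\eqref{global}, and verify that~\eqref{themap} provides a homeomorphism $\sigma\colon\mathcal S\to\tilde\Sigma$ and that $\mathcal S$ is a local topological group. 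The central obstacle is the Riemannian analog of Lemma~\ref{montel}: Montel's theorem is unavailable, so local compactness of $\mathcal S$ must instead be obtained from the Arzel\`a--Ascoli theorem combined with the classical fact that a local isometry of a Riemannian manifold is determined, and uniformly controlled in every $C^k$-norm, by its $1$-jet at a single point (again a Myers--Steenrod-type argument, most transparently realized on $P$). The Bochner--Montgomery theorem~\cite{BM1} then endows $\mathcal S$ with the structure of a local Lie group; its one-parameter subgroups are generated by the basic Killing fields in $\X_{N,b}\equiv\G$, so $\mathcal S$ is identified with a neighborhood $\Sigma$ of $e$ in the simply connected Lie group $G$ associated to $\G$.

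Finally, exactly as in Proposition~\ref{P3} and the closing paragraphs of the proof of Theorem~\ref{LieGroup_1}, the smoothness criterion recalled in Remark~\ref{diff} shows that both $\hat\Phi$ and $\sigma$ are smooth in the Fr\'echet category, so $\hat\Psi\colon\mathcal V_L\times\Sigma\to\mathcal V$, $(f,s)\mapsto f\circ\sigma(s)$, is a diffeomorphism of Fr\'echet manifolds and furnishes a local chart of $\AF$ at the identity. Translating by elements of $\AF$ produces charts $\hat\Psi_f\colon\mathcal V_{L,f}\times\Sigma_f\to\mathcal V_f$ around arbitrary points of $\AF$, whose transition functions act on the second factor by left translations in $G$; hence the submersions $\hat\Phi_f$ define a Lie foliation $\mathcal F_{\mathcal D}$ on $\AF$ transversely modeled on $G$. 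Closedness of $\AF$ in $\D_\F$ is immediate, the transverse isometry condition being closed under uniform convergence. The main analytic difficulty along this plan is the replacement of Montel's theorem by the Arzel\`a--Ascoli argument sketched above; everything else is a structural transcription of Section~\ref{TH}.
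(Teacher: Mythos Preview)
Your steps 2--4 track the paper closely; the paper differs only in cosmetic ways there (it embeds $\mathcal S$ directly into a finite-dimensional space of $1$-jets to get local compactness, and invokes Salem~\cite{Sal} rather than Bochner--Montgomery for the local Lie structure, but your Arzel\`a--Ascoli argument is equivalent in content).

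The genuine gap is step 1. To apply Theorem~\ref{Frobenius} with $B=0$ you need $AA^*$ elliptic, i.e.\ the principal symbol of $A$ must be \emph{surjective} at every nonzero covector. But your operator $A$ lands in a bundle of the form $(T^*\F\otimes\nu\F)\oplus S^2\nu^*\F$, and at a purely leaf-tangent covector $\eta$ the symbol sends $v$ to $(\eta\otimes\pi(v),\,0)$, which never surjects onto the $S^2\nu^*\F$ summand. Your phrase ``elliptic in transverse codirections (the classical fact underlying Myers--Steenrod)'' conflates injectivity with surjectivity of the symbol: the Killing operator is \emph{overdetermined} elliptic ($K^*K$ is elliptic, not $KK^*$). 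Lifting to the orthonormal frame bundle $P$ does not help: the operator encoding preservation of a transverse parallelism is again overdetermined, with symbol of rank $\dim P$ inside a target of rank $(\dim P)\cdot(\mathrm{codim}\,\F_P)$. One might try to supply a nontrivial $B$ extending $A$ to an elliptic complex (a basic Calabi-type resolution of the sheaf of transverse Killing fields), but this is not routine, and the paper explicitly acknowledges in the Remark following Theorem~\ref{LieGroup_R} that it cannot produce a complement to $\aF$ in $\X$ --- which is precisely what Theorem~\ref{Frobenius} would deliver. This is why the Riemannian statement is weaker than Theorem~\ref{LieGroup_1}: $\AF$ is asserted to be a strong ILH-Lie \emph{group}, not a strong ILH-Lie \emph{subgroup} of $\D$.

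Accordingly the paper skips your step 1 altogether. It builds the Hilbert-manifold structure on each $\operatorname{Aut}^k(\F)$ directly from the product charts $\hat\Psi\colon\mathcal V_L^k\times\Sigma\to\mathcal V^k$, using that $\D_L^k$ is already a Hilbert manifold (Proposition~\ref{d-tangent}) and that $\Sigma$ is finite-dimensional, and then checks the axioms of Definition~\ref{ILH}. Your steps 2--4 are exactly what is needed for this; you should drop step 1 and observe that the charts you have already constructed exhibit $\AF=\varprojlim\operatorname{Aut}^k(\F)$ as a strong ILH-Lie group.
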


\begin{remark}
The above theorem is weaker than Theorem~\ref{LieGroup_1} as we are not able to 
exhibit a supplementary to the Lie subalgebra $\aF$ inside the Lie algebra $\mathfrak X$
of $\D$.
Consequently we do not show that
$\AF$ is a strong ILH-Lie subgroup of $\D$.
Notice however that that Theorem \ref{LieGroup_R} implies in particular that
the topology of $\AF$ is second countable and LPSAC. 
\end{remark}

The proof of the above result is parallel to that of Theorem~\ref{LieGroup_1}, 
hence we just indicate the differences. 

\begin{proof}[Sketch of proof]
We recall first that the correspondence $f\mapsto \bar{f} = \Phi(f)$ given by
Proposition~\ref{P1} is well defined for each $f\in \D_{\F}$ close enough to the identity
(cf. Remark~\ref{D_F}). The proof of that proposition shows that there is a commutative 
diagram of continuous maps
\begin{equation}\label{cdiag_r}
\xymatrix{
\mathcal V_L\times \tilde\Sigma \ar[r]^{\Psi} \ar[d]_{pr_2} & \mathcal V \ar[dl]^{\Phi} \\ 
\tilde\Sigma & 
}
\end{equation}
where $\mathcal V$ and $\mathcal V_L$ are neighborhoods of the identity in $\AF$
and $\D_{\F}$ respectively, $\tilde\Sigma$ is the image of $\Phi$ and the map
$\Psi$, that is given by multiplication, is a homeomorphism.
As stated in Remark~\ref{cdiag_k} the same proof also provides commutative diagrams of continuous maps
between the Sobolev completions of the spaces in~\eqref{cdiag_r}.

Proceeding as in Proposition~\ref{P2}, we consider a complete transversal $T$ to $\F$
which now is endowed with a Riemannian metric. The families of local isometries of $T$ that 
are close to the identity and that
commute with the holonomy pseudogroup of $\F$ are the elements of a local group 
$\mathcal S$ that can be identified to $\tilde\Sigma$. 
We claim that $\mathcal S$ is locally compact. In the present setting this fact follows 
from the following observation that replaces Lemma~\ref{montel}. A local isometry is determined, on a 
connected domain, by its $1$-jet at a given point, therefore $\mathcal S$ can be embedded in a
space of $1$-jets over $T$, which is finite dimensional. Moreover, $\mathcal S$ is locally closed 
in this space of jets and therefore it is locally compact.

The arguments used by Salem in~\cite{Sal} can be applied here to show that $\mathcal S$
is in fact a local Lie group, more precisely each element of $\mathcal S$ is in the flow of a Killing 
vector field over $T$ that commutes with the holonomy pseudogroup of $\F$. These Killing vector fields 
form a Lie algebra naturally isomorphic to $\mathcal G$ and therefore $\tilde \Sigma$ is identified, 
through a suitable map $\sigma$, to 
a neighborhood $\Sigma$ of the neutral element of the simply connected Lie group $G$.
Notice that isometries of a smooth Riemannian metric are necessarily of class $C^\infty$. 
Therefore, and as it happens in Remark~\ref{sigma_k}, the map $\sigma$ also identifies 
$\Sigma$ with the set $\tilde\Sigma^k= \Phi(\mathcal V^k)$ for $k$ big enough.

The maps 
$\hat\Psi\colon\mathcal V_L\times \Sigma \to \mathcal V$ and 
$\hat\Psi\colon\mathcal V^k_L\times \Sigma \to \mathcal V^k$
can be regarded as local 
charts of $\AF$ and $\operatorname{Aut}^k(\F)$ respectively. 
Moreover the map $\hat \Phi\colon\mathcal V \to \Sigma$ is a submersion defining
the foliation $\F_{\mathcal D}$ in a neighborhood of the identity. 
In a similar way as in the proof of Theorem~\ref{LieGroup_1}, one can construct 
atlases of adapted local charts for $\AF$ and $\operatorname{Aut}^k(\F)$. In particular 
the groups $\operatorname{Aut}^k(\F)$ turn out to be Hilbert manifolds showing that $\AF$
is an ILH-Lie group. This ends the proof.
\end{proof}

%
%

\vspace{1cm}


\begin{thebibliography}{99}


\bibitem{BM1} Bochner, S. and Montgomery, D.
\emph{Locally compact groups of differentiable transformations}. 
Ann. of Math. (2) 47, (1946). 639--653.

\bibitem{BM2} Bochner, S. and Montgomery, D.
\emph{Groups on analytic manifolds}. 
Ann. of Math. (2) 48, (1947). 659--669.

\bibitem{DK}  Duchamp, T.; Kalka, M. 
\emph{Deformation theory for holomorphic foliations}. 
J. Differential Geom. 14 (1979), no. 3, 317--337.


\bibitem{EM} Ebin, D. G.; Marsden, J. 
\emph{Groups of diffeomorphisms and the motion of an incompressible fluid}. 
Ann. of Math. (2) 92 (1970) 102--163.


\bibitem{GM} G\'{o}mez-Mont, X.
\emph{Transversal holomorphic structures}. 
J. Differential Geom. 15 (1980), no. 2, 161--185

\bibitem{Hae} Haefliger, A.
\emph{Pseudogroups of local isometries}. 
Differential geometry (Santiago de Compostela, 1984), pp. 174--197, 
Res. Notes in Math., 131, Pitman, Boston, MA, 1985.

\bibitem{Ham} Hamilton, R. S.
\emph{The inverse function theorem of Nash and Moser}.
 Bull. Amer. Math. Soc. (N.S.) 7 (1982), no. 1, 65--222.
 
\bibitem{Her} Herman, M.-R.
\emph{Sur la conjugaison diff\'{e}rentiable des diff\'{e}omorphismes du cercle \`{a} des rotations}. 
Inst. Hautes \'{E}tudes Sci. Publ. Math. 49 (1979), 5--233
 
\bibitem{Kob} Kobayashi, S.
\emph{Transformation groups in differential geometry}. Springer-Verlag, Berlin, 1995.


\bibitem{Les1} Leslie, J. A. 
\emph{On a differential structure for the group of diffeomorphisms}. Topology 6 (1967) 263--271. 

\bibitem{Les2} Leslie, J. A. 
\emph{Two classes of classical subgroups of Diff(M)}. J. Differential Geometry 5 (1971), 427--435. 

\bibitem{MN} Meersseman, L.; Nicolau, M.
\emph{Deformations and moduli of structures on manifolds: general existence theorem and application to the Sasakian case}. 
Ann. Sc. Norm. Super. Pisa Cl. Sci. (5) 17 (2017), no. 1, 19--63.

\bibitem{Mol} Molino, P. 
\emph{Riemannian foliations}. Progress in Mathematics, 73. Birkh\"{a}user, Boston, 1988.
 
\bibitem{Om1} Omori, H.
\emph{On the group of diffeomorphisms on a compact manifold}. 
Global Analysis (Proc. Sympos. Pure Math., Vol. XV, Berkeley, Calif., 1968) pp. 167--183 Amer. Math. Soc., Providence, R.I. 1970.

\bibitem{Om2} Omori, H.
\emph{Groups of diffeomorphisms and their subgroups}. 
Trans. Amer. Math. Soc. 179 (1973), 85--122.

\bibitem{Om3} Omori, H.
\emph{Infinite dimensional Lie transformation groups}. 
Lecture Notes in Mathematics, Vol. 427. Springer-Verlag, Berlin-New York, 1974.


\bibitem{Om4} Omori, H.
\emph{Infinite-dimensional Lie groups}.  
Translations of Mathematical Monographs, 158. American Mathematical Society, Providence, RI, 1997.

\bibitem{Pal} Palais, R. S. 
 \emph{Foundations of global non-linear analysis}. W. A. Benjamin, Inc., New York-Amsterdam, 1968.

\bibitem{Pont} Pontryagin, L. S.
\emph{Topological groups}. 
Gordon and Breach Science Publishers, Inc., New York-London-Paris, 1966.

\bibitem{Sal} Salem, E.
\emph{Une g\'{e}n\'{e}ralisation du th\'{e}or\`{e}me de Myers-Steenrod aux pseudogroupes d'isom\'{e}tries}. 
Ann. Inst. Fourier  38 (1988), no. 2, 185--200.

\bibitem{Yoc0} Yoccoz, J.-C.
\emph{Conjugaison diff\'{e}rentiable des diff\'{e}omorphismes du cercle dont le nombre de rotation 
v\'{e}rifie une condition diophantienne}. 
Ann. Sci. \'{E}cole Norm. Sup. (4) 17 (1984), 333--359. 

\bibitem{Yoc} Yoccoz, J.-C. 
\emph{Petits diviseurs en dimension 1}.
Asterisque, 231 (1995)


\end{thebibliography}
\end{document}